\let\emptyset\varnothing
\theoremstyle{plain}
\newtheorem*{theorem*}{Theorem}
\newtheorem*{principle*}{Principle}
\newtheorem{theorem}{Theorem}
\newtheorem{lemma}[theorem]{Lemma}
\newtheorem{proposition}[theorem]{Proposition}
\newtheorem{corollary}[theorem]{Corollary}
\newtheorem{definition}[theorem]{Definition}
\newtheorem*{lemma*}{Lemma}
\newtheorem*{proposition*}{Proposition}
\newtheorem*{corollary*}{Corollary}
\newtheorem*{conjecture*}{Conjecture}
\newtheorem*{definition*}{Definition}
\theoremstyle{remark}
\newtheorem{remark}[theorem]{Remark}
\theoremstyle{definition}
\newtheorem{example}{Example}[section]
\tikzstyle{startstop} = [rectangle, rounded corners, minimum width=3cm, minimum height=1cm, text centered, text width= 4cm, draw=black]
\tikzstyle{io} = [rectangle, minimum width=3cm, minimum height=1cm, text centered, text width=3cm, draw=black]
\tikzstyle{process} = [rectangle, minimum width=3cm, minimum height=1cm, text centered, text width=4cm, draw=black]
\tikzstyle{decision} = [rectangle, minimum width=3cm, minimum height=1cm, text centered, text width=4cm, draw=black]
\tikzstyle{arrow} = [thick,->,>=stealth]
\def\codim{\operatorname{codim}}
\author{David Wen}
\title{Flops and Mordell-Weil group of Elliptic Threefolds with (4,6,12)-singular fibers}
  \address{David Wen, National Center for Theoretical Sciences, No. 1 Sec. 4 Roosevelt Rd., National Taiwan University , Taipei, 106, Taiwan}
  \email{dwen@ncts.ntu.edu.tw}
\begin{document}

\newcommand{\bigslant}[2]{{\raisebox{.2em}{$#1$}\left/\raisebox{-.2em}{$#2$}\right.}}

\thanks{}

\begin{abstract}
Let $f: W \rightarrow T$ be an elliptic threefold that is a Weierstrass model, which is locally defined by $y^2 = x^3 + fx + g$ over $T$, with a singular fiber such that $(f,g,4f^3 + 27g^2)$ vanishes of order $(4,6,12)$ over an isolated point over $T$. Such a fiber can be explicitly resolved to fibers with smaller vanishing order and the resulting model, $Y$, contains a rational elliptic surface, $S$, where some sections of $S$ are flopping curves on $Y$. As a consequence of this arithmetic and geometric connection, we are able to describe some constraints between flops of $Y$ and the properties of the Mordell-Weil group of $S$ and $Y$.
\end{abstract}

\maketitle

\section{Introduction}

Elliptic curves play a significant role in algebraic geometry as they are objects at the crossroads of being study algebraically, arithmetically and geometrically. By extension, elliptic fibrations which are surjective morphisms of varieties whose general fiber is an elliptic curve share similar properties. Specifically, associated to an elliptic fibration, $f:X \rightarrow T$, is a genus $1$ curve $X_\eta$, which is the scheme theoretic fiber over the generic point, $\eta \in T$. This genus $1$ curve is a birational invariant of birationally equivalent elliptic fibrations of $f$. This establishes a link between the arithmetic of $X_\eta$ and the birational geometry of $f:X \rightarrow B$.

This relationship has been studied as seen in \cite{OguisoShioda, Kawamata97}, where if $X_\eta$ turns out to be an elliptic curve then the automorphism group $X_\eta$ gives relative birational automorphisms of $f:X \rightarrow B$ and vice versa. Specifically, in \cite{OguisoShioda}, we see that there is a direct relation between the singular fibers of a rational elliptic surface and the Mordell-Weil group due to geometric constraints. It is this philosophy that we will apply to the threefold case by using the singular fibers to extract information of the Mordell-Weil groups involved.

In particular, this paper focuses on isolated $(4,6,12)$-singular fibers of a Weierstrass threefold over a smooth base. A Weierstrass model is locally defined by $y^2 = x^3 + fx + g$ and we say a fiber, $F_t$, over a point $t \in T$, an isolated $(4,6,12)$-singular fiber if in an open neighborhood of $t$ we have that $(f,g, 4f^3 + 27g^2)$ vanishes of order $(4,6,12)$ only at $t \in T$. This fiber can be locally resolve crepantly $Y \rightarrow W \rightarrow B$ so that the fiber of $Y$ over $b$ contains a rational elliptic surface $S \rightarrow \mathbb{P}^1$. This sets up the main results of this paper:

\begin{theorem}[Theorem \ref{1main}]
\label{firstMain}
Let $C$ be a horizontal curve on $S \rightarrow \mathbb{P}^1$ such that $C \cap Sing(Y) = C \cap Sing(S) = \emptyset$ and which is a flopping curve of $Y \rightarrow B$ then $C$ is a rational section of $S \rightarrow \mathbb{P}^1$
\end{theorem}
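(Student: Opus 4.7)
The plan is to turn the flopping hypothesis into a numerical constraint that forces $d := F \cdot C = 1$, where $F$ denotes a general fiber of $S \to \mathbb{P}^1$. Because $C$ is a flopping curve and misses the singular loci of both $S$ and $Y$, one gets $C \cong \mathbb{P}^1$ with $S$ and $Y$ smooth along $C$. Using $K_S = -F$ (the canonical bundle of a relatively minimal rational elliptic surface) together with adjunction for $C \subset S$, I would compute $C^2_S = d - 2$, so $N_{C/S} \cong \mathcal{O}_{\mathbb{P}^1}(d-2)$.

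Suppose for contradiction $d \geq 2$. Then $h^0(N_{C/S}) = d - 1 \geq 1$ and $h^1(N_{C/S}) = 0$, so the Hilbert scheme of $S$ is smooth of dimension $d - 1$ at $[C]$; this yields a $(d-1)$-dimensional flat, unobstructed family $\{C_t\}_{t \in T}$ of deformations of $C$ inside $S$. Flatness preserves $F \cdot C_t = d > 0$, so each $C_t$ is still horizontal, meets every fiber, and a dimension count forces $\bigcup_t C_t = S$. Moreover $[C_t] = [C]$ in $N_1(Y/B)$, because any divisor on $Y$ restricts to one on $S$ and pairings with $C$ or $C_t$ can be computed inside $S$.

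Now the flopping hypothesis furnishes a small extremal $K$-trivial contraction $\phi \colon Y \to Y'$ over $B$, whose extremal ray $R \subset \overline{NE}(Y/B)$ is generated by $[C]$. Since each $[C_t] \in R$, every $C_t$ is contracted by $\phi$, hence $\mathrm{Exc}(\phi) \supseteq \bigcup_t C_t = S$; but $S$ is $2$-dimensional, contradicting smallness of $\phi$. Therefore $d = 1$, $C \to \mathbb{P}^1$ is a degree-one morphism of smooth rational curves, i.e.\ an isomorphism, and $C$ is a section of $S \to \mathbb{P}^1$.

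The main obstacle is justifying that the deformation family genuinely fills $S$ and yields the same class in the relative Mori cone $N_1(Y/B)$; the vanishing $h^1(N_{C/S}) = 0$ together with the horizontal condition cleanly give both. A secondary technical point is confirming from the explicit construction in earlier sections that the rational elliptic surface $S$ is relatively minimal, so $K_S = -F$ holds as stated; if $(-1)$-curves in fibers of $S$ were present, one would pick up adjunction corrections, but since they are supported in fibers they do not affect pairings with the horizontal curve $C$ and so do not alter the conclusion that $d = 1$.
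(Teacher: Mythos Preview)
Your argument is correct, but it reaches the conclusion by a different route than the paper. The paper works directly: since the flopping contraction is small, the divisor $\hat S$ is not contracted, so the contraction restricts to a morphism from (a resolution of) $\hat S$ to a surface that sends $C$ to a point; Grauert's negativity then gives $C_Z^2<0$, and adjunction $(K_Y+\hat S)|_{\hat S}=K_{\hat S}$ together with $(K_Y+\hat S)\cdot C<0$ forces $K_{S_Z}\cdot C_Z=C_Z^2=-1$, i.e.\ $C$ is a $(-1)$-curve, hence $F\cdot C=1$. You instead run the contrapositive through deformation theory: assuming $d=F\cdot C\ge 2$ you get $C^2_S=d-2\ge 0$, so $h^1(N_{C/S})=0$ and the Hilbert scheme produces a positive-dimensional family of horizontal curves in the same numerical class, forcing $S\subset\mathrm{Exc}(\phi)$ and contradicting smallness.

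Both arguments rest on the same dichotomy (either $C^2<0$ or $C$ moves and sweeps out $S$), but the paper's version is shorter and avoids Hilbert-scheme machinery. Its passage to a log resolution $S_Z$ also handles the possibility that $\hat S$ is not relatively minimal without any side discussion: the exceptional curves of $S_Z\to\hat S$ lie over $\mathrm{Sing}(\hat S)\cup\mathrm{Sing}(Y)$, which $C$ avoids by hypothesis, so they are disjoint from $C_Z$ and the computation $-K_{S_Z}\cdot C_Z=F\cdot C_Z$ goes through on the relatively minimal model. Your last paragraph sketches the same fix, but note that the claim ``$(-1)$-curves in fibers do not affect pairings with $C$'' is not automatic; it really uses that those fibral $(-1)$-curves arise from resolving singular points of $S$ or $Y$, which $C$ is assumed to miss. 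With that clarification your argument is complete. What your approach buys is a geometric picture (the curve would sweep out the surface), which generalises well to situations where one does not have an explicit contraction in hand; the paper's approach buys brevity and a cleaner treatment of singularities.
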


The above relates the birational geometry with the arithmetic of elliptic threefold with the Mordell-Weil group of the rational elliptic surface, $S$. A connection can be seen in the following theorem in the generic case of $S$, e.g. if $S$ was formed from an isolated $(4,6,12)$-singular at a normal crossing intersection.

\begin{theorem}[Theorem \ref{2main}]
\label{secondMain}
If there is an infinite number of flopping curves on $S$ then $MW(Y/T)$ has positive rank and there exists an infinite subgroup of $MW(S/\mathbb{P}^1)$ that consists of flopping sections on $Y$.
\end{theorem}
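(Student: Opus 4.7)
The plan is to apply Theorem~\ref{firstMain} to convert the hypothesis about flopping curves into an arithmetic statement in $MW(S/\mathbb{P}^1)$, and then to transfer this to $MW(Y/T)$ via the inclusion $S \hookrightarrow Y$ together with a Mordell--Weil translation argument.

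First I would cut the infinite supply of flopping curves on $S$ down to an infinite subcollection that satisfies the hypothesis of Theorem~\ref{firstMain}. Only finitely many flopping curves can be vertical for $S \to \mathbb{P}^1$, since the singular fibers of a rational elliptic surface are finite in number and their components supply the only vertical rational curves on $S$. Similarly, only finitely many irreducible curves on $S$ can lie entirely in the lower-dimensional set $Sing(Y) \cup Sing(S)$. Discarding both, the remaining flopping curves are horizontal and disjoint from the singular loci, so by Theorem~\ref{firstMain} they are rational sections of $S \to \mathbb{P}^1$. Because $S$ is a rational elliptic surface, $MW(S/\mathbb{P}^1)$ is finitely generated with finite torsion subgroup by Shioda--Tate, so infinitely many sections force $\rank MW(S/\mathbb{P}^1) > 0$ and in particular produce a flopping section $\sigma_0$ of infinite order.

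Next I would upgrade this to an infinite subgroup of flopping sections. The translation-by-$\sigma_0$ automorphism $t_{\sigma_0}$ of $S$ over $\mathbb{P}^1$ should lift to a birational self-map of $Y$ which is an isomorphism in codimension one, and hence preserves the class of flopping contractions. Applied iteratively to $\sigma_0$, this would show that every multiple $n\sigma_0$ is a flopping section, so that the cyclic group $\langle \sigma_0 \rangle \subset MW(S/\mathbb{P}^1)$ is an infinite subgroup consisting of flopping sections on $Y$. For the positive rank of $MW(Y/T)$, I would then argue that each flopping section supported in the exceptional surface $S$ can be deformed horizontally across $T$, producing a rational section of $Y \to T$; the infinite family $\{n\sigma_0\}$ would thereby yield an element of infinite order in $MW(Y/T)$.

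The main obstacle will be verifying that the Mordell--Weil translation on $S$ genuinely extends to a birational automorphism of $Y$ that respects the flopping structure, and, relatedly, that flopping sections supported on $S$ really do deform horizontally across $T$ to independent elements of $MW(Y/T)$. Both points are local problems at the exceptional divisor of the $(4,6,12)$-resolution and will require an explicit model for how the elliptic translation on the generic fiber propagates through this exceptional locus; without such control, the set of flopping sections remains merely a subset rather than a subgroup, and the link to $MW(Y/T)$ is only numerical.
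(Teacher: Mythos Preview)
Your reduction to infinitely many flopping sections via Theorem~\ref{firstMain} matches the paper, but from that point on the argument runs in the wrong direction. You try to lift structure from $S$ up to $Y$: extend translation-by-$\sigma_0$ on $S$ to a birational automorphism of $Y$, and ``deform'' sections of $S/\mathbb{P}^1$ outward to sections of $Y/T$. Both obstacles you flag are genuine and not merely technical. A section of $S$ lives entirely over the exceptional $\mathbb{P}^1$ in $\hat T$ and there is no mechanism for extending it to a rational section of $Y\to T$; likewise, an automorphism of $S$ over $\mathbb{P}^1$ has no reason to extend across the rest of $Y$. So as written the proposal does not reach either conclusion.

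The paper avoids both issues by reversing the flow. The key missing input is Kawamata's finiteness of minimal models \cite{Kawamata97}: there are only finitely many relative minimal models of $Y/T$ up to isomorphism, and these correspond to $Bir(Y/T)$-orbits of flopping curves. Infinitely many flopping curves therefore force $Bir(Y/T)$ to be infinite, and since $MW(Y/T)$ has finite index in $Bir(Y/T)$ it follows that $MW(Y/T)$ has positive rank. Only then does the paper pass to $S$, by \emph{restricting} the $MW(Y/T)$-action on smooth fibers to the smooth fibers of $S\to\mathbb{P}^1$, giving a homomorphism $MW(Y/T)\to MW(S/\mathbb{P}^1)$ whose image $\Lambda$ is the desired subgroup: the $0$-section of $S$ is already known to be a flopping curve, and its $\Lambda$-orbit (which is $\Lambda$ itself) consists of flopping sections because birational automorphisms of $Y$ carry flopping curves to flopping curves. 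In short, the passage between $MW(Y/T)$ and $MW(S/\mathbb{P}^1)$ goes downward by restriction, not upward by extension.
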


Some applications of the above results include constructing interesting examples in birational geometry. The methods can be a means to constructing different minimal models of elliptic threefold by way of flopping curves on the rational elliptic surface with bounded on the number of models based upon geometric and arithmetic properties of $Y$ and $S$. The above results are also relevant to Physics, in particular F-theory, where studying of the geometric and arithmetic properties of elliptic Calabi-Yau $n$-folds, for $n \leq 4$, is necessary.

\begin{remark}
Lara Anderson, Antonella Grassi, James Gray and Jonathan Heckman independently obtain parts of the argument used to prove theorem \ref{firstMain}, in their work studying non-equidimensional fibers of elliptic Calabi-Yau threefolds. More specifically, they show an analog of proposition \ref{start} and the existence of a flopping curve in a more general setting without assumption of a section as is done here in this paper. The paper containing their result is currently in progress.
\end{remark}

The structure of the paper is as follows. Section 2 reviews the background material of the birational geometry of elliptic fibrations with section and Weierstrass models. Section 3 proves theorem \ref{firstMain} by using the techniques stated of the previous section. Section 4 proves theorem \ref{secondMain} and highlights some application in studying and bounding number of relative minimal models using arithmetic information.

This paper uses standard notation and terminology of birational geometry and minimal model program found in \cite{KollarMori} and the references for Weierstrass models can be found in \cite{Nakayama88}. All varieties will be normal and defined over $\mathbb{C}$.

\section{Background Results} 

This section will review over the basic facts and results of elliptic fibrations with sections, Weierstrass models and birational transformations of Weierstrass models.
\begin{definition}
A variety, $X$, is an elliptic fibration if there is a surjective morphism $f: X \rightarrow B$ such that a general fiber of $f$ is an elliptic curve. An elliptic fibration $f: X \rightarrow B$ has a section if there is a morphism $\sigma:B \rightarrow X$ such that $f \circ \sigma = id_B$.
\end{definition}

\begin{definition}
We say two elliptic fibrations $\pi: X \rightarrow B$ and $\psi:Y \rightarrow T$ are  birationally equivalent if there exist birational maps $f: X \dashrightarrow Y$ and $g: B \dashrightarrow T$ such that the following diagram commutes:
\[
\begin{tikzcd}
\displaystyle
X \arrow[d, "\pi"'] \arrow[r, dashed, "f"]& Y \arrow[d, "\psi"]\\
B \arrow[r, dashed, "g"] & T\\
\end{tikzcd}
\]
\end{definition}

The key elliptic fibrations, in this paper, are Weierstrass models, which are higher dimensional analogs of the Weierstrass equations of elliptic curves. Below are the definition and results that are needed for arguments in later section. References can be made to \cite{Nakayama88} for more general results of Weierstrass models.

\begin{definition}[{\cite[Def. 1.1]{Nakayama88}} ]
Let $B$ be a complex variety and $\mathcal{L}$ a line bundle on $B$. A Weierstrass model over $B$ is the divisor in $\mathbb{P}$ defined by $Y^2Z - X^3 - fXZ^2 - gZ^3$ where:
\begin{itemize}
	\item $f$ and $g$ are sections of $\mathcal{L}^{-4}$ and $\mathcal{L}^{-6}$ and $4f^3 + 27g^2 \neq 0$ as a section in $\mathcal{L}^{-12}$
	\item $\mathbb{P} := Proj_B(\mathcal{O}_S \oplus \mathcal{L}^2 \oplus \mathcal{L}^3)$
	\item $X, Y$ and $Z$ are the sections of $\mathcal{O}_{\mathbb{P}}(1) \otimes \mathcal{L}^{-2}$, $\mathcal{O}_{\mathbb{P}}(1) \otimes \mathcal{L}^{-3}$ and $\mathcal{O}_{\mathbb{P}}(1)$ corresponding to the natural injections:
	\begin{align*}
	\mathcal{L}^2 &\rightarrow \mathcal{O}_B \oplus \mathcal{L}^2 \oplus \mathcal{L}^3\\
	\mathcal{L}^3 &\rightarrow \mathcal{O}_B \oplus \mathcal{L}^2 \oplus \mathcal{L}^3\\
	\mathcal{O}_S &\rightarrow \mathcal{O}_B \oplus \mathcal{L}^2 \oplus \mathcal{L}^3\\
	\end{align*}
\end{itemize}
We denote a Weierstrass model as $W := W(\mathcal{L},f,g)$ and, from the above definition, $W \rightarrow B$ is an elliptic fibration with section, $\sigma: B \rightarrow W$. The discriminant locus is the image of the singular fibers of $W$ on the base $B$ which we denote by $\Delta \subset B$.
\end{definition}

\begin{proposition}[{\cite[(1.2)]{Nakayama88}} ]
\label{WeierstrassCanonical}
Let $\pi: W := W(\mathcal{L},f,g) \rightarrow B$ be a Weierstrass model, then:
\begin{itemize}
	\item If $B$ is normal then $W$ is normal
	\item If $B$ is Gorenstein then $W$ is Gorenstein
	\item $\omega_W \cong \pi^*(\omega_B \otimes \mathcal{L}^{-1})$
\end{itemize}
\end{proposition}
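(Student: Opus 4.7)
The three assertions are local on $B$, so the plan is to pass to an affine open $U = \operatorname{Spec} A \subseteq B$ on which $\mathcal{L}$ trivializes; there $\mathbb{P}|_U \cong \mathbb{P}^2_A$ and $W|_U$ is the Cartier divisor cut out by the single polynomial $F = Y^2Z - X^3 - fXZ^2 - gZ^3$, and $\pi\colon W \to B$ is flat with plane cubic fibers.

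For normality I would first treat the smooth case. If $B$ is smooth then $\mathbb{P}^2_A$ is smooth and $W|_U$ is a hypersurface, hence a local complete intersection, hence Cohen-Macaulay, so Serre's criterion reduces the problem to showing that $\operatorname{codim}_W \operatorname{Sing}(W) \ge 2$. A Jacobian computation of $F$ in the fiber directions shows that any singular point of $W|_U$ maps into the discriminant $\Delta = \{4f^3 + 27g^2 = 0\}$ with isolated (nodal or cuspidal) preimage, giving $\dim \operatorname{Sing}(W|_U) \le \dim \Delta = \dim B - 1 = \dim W - 2$. For a merely normal base, restrict first to the regular locus $B^{\circ} \subseteq B$ to obtain $W|_{B^{\circ}}$ normal; then $\pi^{-1}(\operatorname{Sing} B)$ has codimension $\ge 2$ in $W$ by flatness of $\pi$, so $R_1$ still holds globally, and $S_2$ follows from flatness together with the Cohen-Macaulayness of the plane cubic fibers.

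For the Gorenstein assertion, if $B$ is Gorenstein then the projective bundle $\mathbb{P}$ is Gorenstein with invertible dualizing sheaf, and since $W \subset \mathbb{P}$ is Cartier, $\omega_W$ is invertible by the adjunction formula. For the explicit canonical sheaf computation I would combine the projective-bundle formula
\[
\omega_{\mathbb{P}/B} \cong \mathcal{O}_{\mathbb{P}}(-3) \otimes \pi^* \det(\mathcal{O}_B \oplus \mathcal{L}^2 \oplus \mathcal{L}^3) \cong \mathcal{O}_{\mathbb{P}}(-3) \otimes \pi^* \mathcal{L}^{5}
\]
with the divisor class $\mathcal{O}_\mathbb{P}(W) \cong \mathcal{O}_\mathbb{P}(3) \otimes \pi^* \mathcal{L}^{-6}$, which one verifies by checking that each monomial of $F$ is a section of that line bundle. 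Adjunction then gives
\[
\omega_W \cong \bigl(\pi^*\omega_B \otimes \omega_{\mathbb{P}/B} \otimes \mathcal{O}_\mathbb{P}(W)\bigr)\big|_W \cong \pi^*(\omega_B \otimes \mathcal{L}^{-1}).
\]

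The main obstacle I anticipate is the normality step in the general normal (non-smooth) case, where the ambient $\mathbb{P}$ is no longer smooth and one cannot directly invoke the hypersurface-in-a-smooth-variety shortcut. The argument sketched above works because the bad locus has the right codimension, but it does require care to transfer $S_2$ across $\pi^{-1}(\operatorname{Sing} B)$ via flatness of $\pi$ and the Cohen-Macaulay structure of the fibers inherent to the Weierstrass equation.
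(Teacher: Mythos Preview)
The paper does not give its own proof of this proposition; it simply records the statement and cites Nakayama \cite[(1.2)]{Nakayama88}. Your sketch is correct and is essentially the standard argument one finds in that reference: localize and trivialize $\mathcal{L}$ so that $W$ becomes a Cartier hypersurface in $\mathbb{P}^2_A$, use Serre's criterion together with the fact that Weierstrass cubics have at worst a single node or cusp to get normality, and compute $\omega_W$ by adjunction from the projective-bundle formula for $\omega_{\mathbb{P}/B}$ and the class $\mathcal{O}_\mathbb{P}(W)\cong\mathcal{O}_\mathbb{P}(3)\otimes\pi^*\mathcal{L}^{-6}$. The one place to be a bit more careful is your $S_2$ claim over a singular normal base: you should invoke explicitly that a flat morphism with $S_2$ base and Cohen--Macaulay fibers has $S_2$ total space (this is in EGA IV or Matsumura), rather than leaving it as ``transfer via flatness.''
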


\begin{definition}
Let $W := W(\mathcal{L},f,g) \rightarrow S$ be a Weierstrass model over $S$ and $h: T \rightarrow S$ a morphism such that $h(T) \not\subset Supp(\Delta)$, then we can define the pullback of $W$ to $T$ by $W_T := W(h^*\mathcal{L}, \hat{f}, \hat{g}) \rightarrow T$ where $\hat{f}$ and $\hat{g}$ is the pullback of $f$ and $g$ by $h$.
\end{definition}

\begin{definition}[{\cite[Def. 1.6]{Nakayama88}}]
Given a Weierstrass model, $W(\mathcal{L},f,g) \rightarrow B$ and $y \in B$, then we call the fiber over $y$ an $(a,b,d)$-fiber where $a,b$ and $d$ are the respective order of vanishing of $f,g$ and $\Delta$ at $y$. We say that $W(\mathcal{L},f,g)$ is a minimal Weierstrass model if there is no prime divisor $D$ on $B$ such that $div(f) \geq 4D$ and $div(g) \geq 6D$.
\end{definition}

\begin{remark}
The $(n,m,d)$-fibers with $n < 4$ or $m < 6$ will correspond to fibers which appear in Kodaira's classification of singular fibers on relative minimal elliptic surfaces.
\end{remark}

\begin{proposition}[{\cite[Thm. 2.5]{Nakayama88}}]
Any Weierstrass model is  birationally equivalent to a minimal Weierstrass model
\end{proposition}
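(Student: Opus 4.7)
The plan is to start with an arbitrary Weierstrass model $W(\mathcal{L}, f, g) \to B$ and, as long as it fails to be minimal, exhibit an explicit birational modification that lowers the obstruction, then argue that this process must terminate.

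Suppose $W = W(\mathcal{L},f,g)$ is not minimal, so there exists a prime divisor $D \subset B$ with $\text{div}(f) \geq 4D$ and $\text{div}(g) \geq 6D$. Let $s_D \in H^0(B, \mathcal{O}_B(D))$ be a defining section. The first step is to define the twist $W' := W(\mathcal{L}(D), f', g')$ where $f' := f/s_D^4$ and $g' := g/s_D^6$. I would check that $f'$ is a global section of $\mathcal{L}(D)^{-4} = \mathcal{L}^{-4}\otimes \mathcal{O}_B(-4D)$ and analogously for $g'$, using the vanishing hypothesis, and that $4(f')^3 + 27(g')^2 = (4f^3+27g^2)/s_D^{12}$ remains a nonzero section of $\mathcal{L}(D)^{-12}$, so that $W'$ is a legitimate Weierstrass model.

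The second step is to exhibit a birational equivalence $W \dashrightarrow W'$ over $B$. On an affine chart with local coordinates $(x,y)$, the substitution $(x,y) = (s_D^2 x', s_D^3 y')$ transforms $y^2 = x^3 + fx + g$ into $y'^2 = x'^3 + f'x' + g'$ after dividing through by $s_D^6$. This change of variables is clearly invertible away from $D$ and extends to a birational map compatible with both projections to $B$, so $W$ and $W'$ are birationally equivalent as elliptic fibrations over $B$.

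The third and most delicate step is termination. One cannot iterate blindly forever because $\mathcal{L}$ is being replaced by $\mathcal{L}(D)$, so $\mathcal{L}^{-1}$ becomes ``smaller'' at each twist, and one needs a measure that strictly decreases. The natural invariant is the divisor $\text{div}(\Delta)$ of the discriminant, which is a section of $\mathcal{L}^{-12}$: after one twist it is replaced by $\text{div}(\Delta) - 12D$, hence its total multiplicity along the (finitely many) prime divisors where $(\text{ord}(f),\text{ord}(g)) \geq (4,6)$ strictly drops by $12$. Since $\text{div}(\Delta)$ is an effective divisor and each twist reduces its coefficient along some component by a positive amount while preserving effectiveness of all the data (the sections $f', g', \Delta'$ remain regular), the procedure must halt after finitely many steps at a minimal Weierstrass model birational to $W$.

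The main obstacle I anticipate is bookkeeping in the termination argument: one must argue that twisting at one bad divisor $D$ does not create new bad divisors elsewhere (it cannot, since $\text{ord}_{D'}(f'), \text{ord}_{D'}(g')$ for $D' \neq D$ are unchanged), and that the process is well-defined even when there are several non-minimal divisors by choosing any one at a time. The line-bundle bookkeeping for $\mathcal{L} \mapsto \mathcal{L}(D)$ and the verification that $W'$ is a bona fide Weierstrass model in the sense of the definition are where the technical care is needed; the rest is a direct application of the substitution above.
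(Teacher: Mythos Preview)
Your proposal is correct and follows the same approach that the paper sketches in the remark immediately after the proposition: twist $\mathcal{L}$ by $\mathcal{O}_B(D)$, replace $(f,g)$ by $(f/s_D^4,\, g/s_D^6)$, and iterate until no bad divisor remains. The only refinement the paper adds (and which you might want to note, since it is used downstream in Corollary~\ref{CanFiber} and Lemma~\ref{crep}) is that your substitution $(x,y)=(s_D^{2}x',\,s_D^{3}y')$ can be interpreted as a weighted blow-up of type $(2,3,1)$, so the birational equivalence $\hat W \dashrightarrow W$ is in fact a birational \emph{morphism} over $B$.
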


\begin{remark}
The argument, of the above proposition, modifies the line bundle used to define the Weierstrass model, $W(\mathcal{L},f,g)$, by twisting it by the line bundle associated to $D$ where $div(f) \geq 4D$ and $div(g) \geq 6D$. This produces a new Weierstrass model $\hat{W} := W(\mathcal{L} \otimes \mathcal{O}_B(D), \hat{f}, \hat{g})$ where $\epsilon^4\hat{f} = f$ and $\epsilon^6\hat{g} = g$ for some section $\epsilon \in H^0(\mathcal{O}_B(D))$. 

We know that $\hat{W}$ is birational to $W$, but we actually have something stronger since the above twisting can be realized as a weighted blow up of type $(2,3,1)$. So there is, in fact, a birational morphism $\hat{W} \rightarrow W$ commuting with the map to $B$. Naturally, we should expect the twisting to modify canonical bundle of the Weierstrass model.
\end{remark}

\begin{example}
Consider the following elliptic surface defined by the Weierstrass model:
\[
S := W(\mathcal{O}_{\mathbb{P}^1}(-p), s^4, s^6) \rightarrow \mathbb{P}^1
\]
where $s \in \mathcal{O}_{\mathbb{P}^1}(p)$ is a section that vanishes at some point $p \in \mathbb{P}^1$. Thus the equation defining the surface is:
\[
V(ZY^2 - X^3 + s^4XZ^2 + s^6Z^3) \subset \mathbb{P}
\]
where $\mathbb{P}$ is the $\mathbb{P}^2$-bundle over $\mathbb{P}^1$. Without loss of generalities, we can work locally with $Z = 1$ and $p = 0 \in \mathbb{C}$ giving the following equation in $\mathbb{C}^3$
\[
y^2 = x^3 + s^4x + s^6
\]
and the singularity is at $x = y = s = 0$. To resolve we apply a weighted blow up to $\mathbb{C}^3$ with weights $(2,3,1)$ to $(x,y,s)$. This gives the following equations, in the coordinates $((x,y,s),[u,v,w]) \in \mathbb{C}^3 \times \mathbb{P}(2,3,1)$,
\begin{align*}
wx = s^2u \hspace{1 cm} wy = s^3v \hspace{1 cm} vx^3 = uy^2
\end{align*}
which defines the blow up with weights $(2,3,1)$ at $0 \in \mathbb{C}^3$. One can check that the strict transform of $S$ does not contain the singular points of the exceptional $\mathbb{P}(2,3,1)$ over $0$. But in particular we can see that in the $w$-chart, with $x = s^2u$ and $y = s^3v$, the strict transformation has the following form:
\[
v^2 - u^3 - u - 1
\]
This shows that the strict transform of $S$ obtain from the weighted blow up will, in fact, be isomorphic to the Weierstrass model $\hat{S} := W(\mathcal{O}_{\mathbb{P}^1}, 1, 1)$ by \cite[Thm. 2.5]{Nakayama88}. Thus we have a birational morphism $\hat{S} \rightarrow S$ and comparing the canonical divisor of $S$ and $\hat{S}$, by using proposition \ref{WeierstrassCanonical}, we can see that $S$ has log canonical singularities.
\end{example}

\begin{corollary}
\label{CanFiber}
Let $\pi: W:= W(\mathcal{L},f,g) \rightarrow B$ be a Weierstrass model and $D$ a prime divisor on $B$ such that $div(f) \geq 4D$ and $div(g) \geq 6D$, then there is a Weierstrass model $\hat{\pi}: \hat{W} := W(\hat{\mathcal{L}}, \hat{f}, \hat{g})$ such that:
\begin{itemize}
	\item $\hat{\mathcal{L}} \cong \mathcal{L} \otimes \mathcal{O}_B(D)$
	\item There exists a section $\epsilon \in \mathcal{O}_B(D)$ such that $\epsilon^4\hat{f} = f$ and $\epsilon^6\hat{g} = g$.
	\item There exists a birational morphism $\phi:\hat{W} \rightarrow W$ that commutes with the morphisms over $B$.
	\item $\omega_{\hat{W}} = \phi^*(\omega_{W} \otimes \pi^*(\mathcal{O}_B(-D)))$.
\end{itemize}
\end{corollary}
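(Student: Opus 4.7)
The plan is to unpack the construction outlined in the remark preceding the corollary, and then read off the canonical bundle formula by applying Proposition \ref{WeierstrassCanonical} twice. The existence of the new Weierstrass data is the easy part; producing an honest birational morphism (as opposed to a birational map) is the main technical point.

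First I would produce $\hat{\mathcal{L}}$, $\hat{f}$, $\hat{g}$. Because $D$ is an effective prime divisor on $B$, there is a distinguished section $\epsilon \in H^0(B, \mathcal{O}_B(D))$ whose vanishing locus is $D$. Set $\hat{\mathcal{L}} := \mathcal{L} \otimes \mathcal{O}_B(D)$, so that $\hat{\mathcal{L}}^{-4} \cong \mathcal{L}^{-4} \otimes \mathcal{O}_B(-4D)$ and $\hat{\mathcal{L}}^{-6} \cong \mathcal{L}^{-6} \otimes \mathcal{O}_B(-6D)$. The hypotheses $\operatorname{div}(f) \geq 4D$ and $\operatorname{div}(g) \geq 6D$ mean that $f$ and $g$ are divisible by $\epsilon^4$ and $\epsilon^6$ respectively, which determines sections $\hat{f} \in H^0(\hat{\mathcal{L}}^{-4})$ and $\hat{g} \in H^0(\hat{\mathcal{L}}^{-6})$ with $\epsilon^4 \hat{f} = f$ and $\epsilon^6 \hat{g} = g$. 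Since $4\hat{f}^3 + 27\hat{g}^2$ multiplied by $\epsilon^{12}$ equals $4f^3+27g^2 \neq 0$, the new discriminant is also nonzero, so $\hat{W} := W(\hat{\mathcal{L}}, \hat{f}, \hat{g})$ is a bona fide Weierstrass model over $B$.

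Next I would construct $\phi: \hat{W} \to W$ as a $B$-morphism. Over $B \setminus D$ the section $\epsilon$ is invertible and the scaling $(X, Y, Z) \mapsto (\epsilon^{-2}X, \epsilon^{-3}Y, Z)$ produces a fiberwise isomorphism between the two $\mathbb{P}^2$-bundles that preserves the defining Weierstrass equation. Over a neighborhood of $D$ the map extends via the weighted blow up of type $(2,3,1)$ exhibited in the example: after choosing local coordinates one sees that $\phi$ is the globalization of this weighted blow up, sending $(x,y) \mapsto (\epsilon^2 x, \epsilon^3 y)$ on the affine chart $Z=1$. The local descriptions glue into a global morphism because they differ only by the transition functions of $\mathcal{L}$, and $\phi$ is birational since its restriction over $B \setminus D$ is an isomorphism.

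Finally, Proposition \ref{WeierstrassCanonical} gives $\omega_W \cong \pi^*(\omega_B \otimes \mathcal{L}^{-1})$ and $\omega_{\hat{W}} \cong \hat{\pi}^*(\omega_B \otimes \hat{\mathcal{L}}^{-1})$. Using $\hat{\pi} = \pi \circ \phi$ and $\hat{\mathcal{L}}^{-1} \cong \mathcal{L}^{-1} \otimes \mathcal{O}_B(-D)$, one obtains
\[
\omega_{\hat{W}} \cong \phi^*\pi^*\bigl(\omega_B \otimes \mathcal{L}^{-1} \otimes \mathcal{O}_B(-D)\bigr) \cong \phi^*\bigl(\omega_W \otimes \pi^*\mathcal{O}_B(-D)\bigr),
\]
which is the claimed canonical bundle identity.

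The hard part will be verifying that the local $(2,3,1)$-weighted blow up glues into a global morphism $\phi: \hat{W} \to W$, since a priori the construction of $\hat{W}$ is only asserted to yield a birational equivalence. The rescaling $(X,Y,Z) \mapsto (\epsilon^{-2}X, \epsilon^{-3}Y, Z)$ is not a morphism of the ambient projective bundles where $\epsilon$ vanishes, and one must check that composing with the inclusion $\hat{W} \hookrightarrow \mathbb{P}_B(\mathcal{O}_B \oplus \hat{\mathcal{L}}^2 \oplus \hat{\mathcal{L}}^3)$ resolves the indeterminacy by the corresponding weighted blow up; once this is established the rest of the corollary is formal bookkeeping.
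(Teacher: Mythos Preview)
Your proposal is correct and follows exactly the approach the paper intends: the corollary is stated without proof, immediately after the remark explaining the twist $\mathcal{L}\mapsto\mathcal{L}\otimes\mathcal{O}_B(D)$ as a weighted $(2,3,1)$ blow up and the worked example computing the strict transform in local coordinates, so the paper treats it as a direct consequence of that discussion together with Proposition~\ref{WeierstrassCanonical}. Your write-up simply makes those two ingredients explicit, and you have correctly isolated the only nontrivial point---that the local rescaling $(x,y)\mapsto(\epsilon^2 x,\epsilon^3 y)$ globalizes to an honest $B$-morphism $\hat{W}\to W$ rather than merely a birational map.
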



\begin{definition}
Given a Weierstrass model $\pi: W := W(\mathcal{L},f,g) \rightarrow B$, with $b \in B$ and $W_b$ the fiber over $b$, we say that $W_b$ is an isolated $(n,m,d)$-fiber if there is no prime divisor $D$ on $B$ that contains $b$ and the general fiber over $D$ is a $(n,m,d)$-fiber.
\end{definition}

\begin{example}[{\cite[Example 3.8]{Nakayama88}}]
\label{example}
Let $T \cong \mathbb{P}^2$, $\mathcal{L} = \omega_T$ and $\alpha, \beta \in H^0(S,\omega_T^{-1})$ such that $\alpha$ and $\beta$ are general global sections of $\mathcal{L}^{-1}$. Consider the Weierstrass model $W_T := W(\omega_T, \alpha^4, \beta^6) \rightarrow T$ with isolated $(4,6,12)$ singular fibers over the $9$ points of transverse intersections of $\alpha$ and $\beta$ on $S$.

Let $\hat{T}$ be the blow up of those $9$ points and $W_{\hat{T}}$ be the corresponding Weierstrass model pulling back $W_T \rightarrow T$ by base change. Then $W_{\hat{T}} \rightarrow \hat{T}$ is not a minimal Weierstrass model since over each exceptional curve of $\hat{T} \rightarrow T$ the general fiber is a $(4,6,12)$ singular fiber. Thus we can take the minimal Weierstrass model $\hat{W}_{\hat{T}} \rightarrow W_{\hat{T}} \rightarrow W_T \rightarrow T$ and we will see that $\hat{W}_{\hat{T}}$ is non-singular and $\omega_{\hat{W}_{\hat{T}}}$ is the pulback of $\omega_{W_T}$ in the map above. Thus, $\hat{W}_{\hat{T}}$ is a crepant resolution of $W_T$.
\end{example}

\begin{corollary}
Let $f:W \rightarrow B$ be a Weierstrass model with $D$ a prime divisor on $B$ such that the general fiber over $D$ is a $(n,m,d)$ with $n\geq4$, $m\geq 6$ and $d \geq 12$, then the total discrepancy of $W$ is less than or equal to $-1$. 
\end{corollary}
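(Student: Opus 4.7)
The plan is to exhibit a prime exceptional divisor over $W$ with discrepancy exactly $-1$, which forces the total discrepancy to be at most $-1$. Since the general fiber over $D$ has $n \geq 4$ and $m \geq 6$, we have $\operatorname{div}(f) \geq 4D$ and $\operatorname{div}(g) \geq 6D$, so Corollary \ref{CanFiber} applies to $D$ and produces a birational morphism $\phi \colon \hat W \to W$ with $\omega_{\hat W} \cong \phi^*\omega_W \otimes \phi^*\pi^*\mathcal{O}_B(-D)$, i.e.\ $K_{\hat W} \sim \phi^* K_W - \hat\pi^* D$.

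Following the remark and example after Proposition \ref{WeierstrassCanonical}, I would realize $\phi$ concretely as a weighted blow up. Working in an analytic neighborhood of a generic point of $D$ with $D$ locally cut out by $\epsilon = 0$, I embed $W$ as the hypersurface $\{y^2 - x^3 - fx - g = 0\}$ in a smooth ambient $\mathcal{A}$ and take the weighted blow up $\sigma \colon \mathcal{A}' \to \mathcal{A}$ of type $(2,3,1)$ along the cusp locus $V(x, y, \epsilon) \subset W$. Two numerical inputs suffice: the exceptional divisor $E_\mathcal{A}$ of $\sigma$ has discrepancy $a(E_\mathcal{A}, \mathcal{A}) = 2 + 3 + 1 - 1 = 5$, and the Weierstrass polynomial has weighted multiplicity exactly $6$ along the center, because $y^2$ and $x^3$ each have weight $6$ while $fx$ has weight $\operatorname{ord}_D(f) + 2 \geq 6$ and $g$ has weight $\operatorname{ord}_D(g) \geq 6$. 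Hence $\sigma^* W = \hat W + 6 E_\mathcal{A}$, and adjunction yields
\[
K_{\hat W} = (K_{\mathcal{A}'} + \hat W)\big|_{\hat W} = \sigma^*(K_\mathcal{A} + W)\big|_{\hat W} - E_\mathcal{A}\big|_{\hat W} = \phi^* K_W - \tilde E,
\]
where $\tilde E := E_\mathcal{A}\big|_{\hat W}$. Since $\tilde E$ is contracted by $\phi$ onto the codimension-two cusp locus, it is a genuine $\phi$-exceptional prime divisor, and so $a(\tilde E, W) = -1$.

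The main subtlety I expect is verifying that $\tilde E$ is a single prime $\phi$-exceptional divisor and not conflated with the non-exceptional strict transform of $F_D = \pi^{-1}(D)$. Both appear as components of the effective divisor $\hat\pi^* D$ arising in the class-level relation from Corollary \ref{CanFiber}, but only $\tilde E$ contributes to the adjunction formula above. Once this is clarified by tracking the standard charts of the $(2,3,1)$-weighted blow up, the computation gives the desired bound on the total discrepancy directly.
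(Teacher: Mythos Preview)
Your argument is correct and is essentially the approach the paper intends: the corollary is stated without proof because it is meant to follow immediately from Corollary~\ref{CanFiber}, which already gives $K_{\hat W}=\phi^*K_W-\hat\pi^*D$ with $\hat\pi^*D$ a prime divisor contracted by $\phi$ onto the codimension-two cusp locus, hence an exceptional divisor of discrepancy $-1$. Your ambient adjunction computation (discrepancy $5$ for the weighted blow-up, multiplicity $6$ for the Weierstrass hypersurface) is a more explicit way to reach the same conclusion.

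One small clarification worth making in your write-up: the model produced by Corollary~\ref{CanFiber} is the Weierstrass model $\hat W=W(\mathcal L\otimes\mathcal O_B(D),\hat f,\hat g)$, and on that model $\hat\pi^*D$ is already irreducible (every fiber of a Weierstrass model is irreducible), so there is no second ``strict transform'' component to worry about there. The two-component phenomenon you anticipate occurs only on the ambient weighted blow-up strict transform, which can differ from $\hat W$ by the proper transform of $\pi^{-1}(D)$; your adjunction formula correctly shows that this extra component carries coefficient $0$, so the subtlety you flag is real but resolves exactly as you indicate.
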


\begin{remark}
This can be generalized with higher orders of vanishing in high codimensions. For example, we can have $(n,m,24)$ singular fibers supported on $V \subset B$ with $\codim(V) = 2$.
\end{remark}

The above in particular highlights that the vanishing of $(f,g,\Delta)$ has a threshold associate to multiples of $(4,6,12)$ which influences the geometry. Since $\Delta = 4f^3 + 27g^2$ the vanishing of $\Delta$ is at a point is dependent on both $f$ and $g$. This sets up the definition of the singular fibers relevant in this paper.
\begin{definition}
A $(4,6,12)$-fiber is a fiber of a Weierstrass model of type $(n,m,12)$ where $n = 4$ and $m \geq 6$ or $n \geq 4$ and $m = 6$.  
\end{definition}

\section{Flops of Elliptic Threefold with $(4,6,12)$-singular fiber}

In this section, we will prove Theorem \ref{firstMain} by studying a resolution of a Weierstrass model with an isolated $(4,6,12)$-singular fiber. For the rest of the paper, we assume that $\pi: W := W(\mathcal{L},f,g) \rightarrow T$ is a minimal Weierstrass threefold with at worst canonical singularities, with $T$ being smooth and containing a point $0 \in T$ which supports an isolated singular fibers of type $(n,m,12)$. The assumption of canonical singularities on the Weierstrass model will not conflict with the existences of the isolated $(4,6,12)$-fiber since the following will show that the isolated $(4,6,12)$-fibers can be resolved crepantly. Notation and terminology of birational geometry and the minimal model program will follow \cite{KollarMori}. 

\subsection{Resolving an Isolated $(4,6,12)$-fiber}
We start with the following proposition to obtain a nice resolution of $W$.
\begin{proposition}
\label{start}
Let $0 \in T$ such that $W_0$, the fiber over $0$, is an isolated $(4,6,12)$-fiber, then there is a terminal model of $W$ that contains a rational elliptic surface, $S \rightarrow \mathbb{P}^1$ which maps to $0 \in T$.
\end{proposition}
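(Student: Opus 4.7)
The plan is to construct the terminal model in two steps: first blow up the isolated point $0 \in T$ on the base, then apply the $(2,3,1)$-weighted twist of Corollary \ref{CanFiber} to restore minimality of the Weierstrass data. The rational elliptic surface will appear as the restriction of the new Weierstrass model to the exceptional curve of the base blow up.

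First I would let $\mu: \tilde{T} \rightarrow T$ be the blow up of the smooth surface $T$ at $0$, with smooth exceptional divisor $E \cong \mathbb{P}^1$ satisfying $E^2 = -1$. Pulling back the Weierstrass data gives $W_{\tilde{T}} := W(\mu^*\mathcal{L}, \mu^*f, \mu^*g) \rightarrow \tilde{T}$, and because $(f,g)$ vanishes to order $(4,6)$ at $0 \in T$, the sections $\mu^*f$ and $\mu^*g$ vanish along $E$ to orders at least $4$ and $6$; the isolated hypothesis forces the generic orders along $E$ to be exactly $(4,6)$. Applying Corollary \ref{CanFiber} with $D = E$ then produces a minimal Weierstrass model $\hat{\pi}: \hat{W} := W(\hat{\mathcal{L}}, \hat{f}, \hat{g}) \rightarrow \tilde{T}$ with $\hat{\mathcal{L}} = \mu^*\mathcal{L} \otimes \mathcal{O}_{\tilde{T}}(E)$, together with a birational morphism $\phi: \hat{W} \rightarrow W_{\tilde{T}}$.

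Next I would verify crepancy of the composite $\psi: \hat{W} \rightarrow W$. Combining the standard blow up formula $\mu^*\omega_T = \omega_{\tilde{T}} \otimes \mathcal{O}_{\tilde{T}}(-E)$ with Proposition \ref{WeierstrassCanonical} and the canonical formula in Corollary \ref{CanFiber} yields
\[
\omega_{\hat{W}} = \hat{\pi}^*(\omega_{\tilde{T}} \otimes \hat{\mathcal{L}}^{-1}) = \hat{\pi}^*\mu^*(\omega_T \otimes \mathcal{L}^{-1}) = \psi^*\omega_W,
\]
so $\psi$ is crepant and $\hat{W}$ has at worst canonical singularities. The rational elliptic surface is then identified as $S := \hat{\pi}^{-1}(E)$: its structure map to $E \cong \mathbb{P}^1$ is a Weierstrass model with line bundle $\hat{\mathcal{L}}|_E = \mathcal{O}_{\tilde{T}}(E)|_E = \mathcal{O}_{\mathbb{P}^1}(-1)$, since the factor $\mu^*\mathcal{L}$ is trivial along $E$ (being pulled back from a point). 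Proposition \ref{WeierstrassCanonical} then gives $\omega_S = \pi_S^*\mathcal{O}_{\mathbb{P}^1}(-1)$, which forces $\chi(\mathcal{O}_S) = 1$ and identifies $S$ as a rational elliptic surface over $\mathbb{P}^1$ as claimed.

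The main obstacle is passing from canonical to terminal. The computation above only guarantees that $\hat{W}$ inherits canonical singularities from $W$, and points of $E$ where the pulled-back discriminant vanishes to excessive order may still yield non-terminal (though crepantly resolvable) singularities lying on $S$. I would handle these by iterating the blow-up-and-twist procedure at any residual isolated $(n,m,12)$-configurations on $\hat{W}$, and then taking a small $\mathbb{Q}$-factorial terminalization to clean up any remaining compound Du Val points. Because the isolated hypothesis bounds the total order of vanishing of $(f,g)$ near $0$, the iteration terminates after finitely many steps and produces a terminal model in which the constructed $S \rightarrow \mathbb{P}^1$ still sits as (the strict transform of) the exceptional surface mapping to $0 \in T$.
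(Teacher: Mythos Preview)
Your approach is essentially the paper's: blow up the base at $0$, twist via Corollary~\ref{CanFiber} to restore minimality, verify crepancy through the canonical bundle formula (this is exactly Lemma~\ref{crep}), and then read off the line bundle on the exceptional surface as $\mathcal{O}_{\mathbb{P}^1}(-1)$. Your identification $\hat{\mathcal{L}}|_E = \mu^*\mathcal{L}|_E \otimes \mathcal{O}_{\tilde T}(E)|_E = \mathcal{O}_{\mathbb{P}^1}(-1)$ is in fact a slightly cleaner way to see what the paper obtains by counting $\sum_i n_i = 4$ or $\sum_i m_i = 6$.

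There is, however, one genuine ordering problem. You declare $S$ rational after a \emph{single} blow up, arguing that $\omega_S \cong \pi_S^*\mathcal{O}_{\mathbb{P}^1}(-1)$ forces $\chi(\mathcal{O}_S)=1$. As the paper's Example~\ref{example1} shows, the restricted data $(\hat f|_E,\hat g|_E)$ can itself acquire a $(4,6,12)$-point on $E$; in that case $S$ has a strictly log canonical (not canonical) singularity, and its minimal resolution is birational to $C\times\mathbb{P}^1$ for an elliptic curve $C$ --- not rational. Your $\omega_S$ formula is correct for the singular Weierstrass surface, but it no longer controls the birational type once the singularities are worse than canonical. Moreover, the rational elliptic surface produced by the iteration is \emph{not} the strict transform of this first $S$: it is the new exceptional surface sitting over the \emph{last} exceptional $\mathbb{P}^1$ created in the chain of base blow ups, once no residual $(4,6,12)$-point remains. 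Your restriction argument $\hat{\mathcal{L}}|_E=\mathcal{O}(-1)$ applies verbatim to that final $E$, so the fix is simply to run the iteration to completion first (as the paper does) and only then invoke your rationality computation on the correct surface.
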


Before starting the proof, consider the following commutative diagram:
\[
\begin{tikzcd}
\displaystyle
W \arrow[d, "\pi"'] & \hat{W} := W\times_T \hat{T} \arrow[d, "\hat{\pi}"] \arrow[l, "\phi"] & X \arrow[d, "f"] \arrow[l, "\psi"] \arrow[bend right=26, swap]{ll}{h} & S \arrow[d, "f|_S"] \arrow[l, hook'] \\
T & \hat{T} \cong Bl_0(T) \arrow[l, "g"] & \hat{T} \arrow[l, "\sim"'] & E \cong \mathbb{P}^1 \arrow[l,hook']\\
\end{tikzcd}
\]
with the following properties:
\begin{itemize}
	\item $0 \in T$ supports an isolated $(4,6,12)$-fiber.
	\item $\hat{T} := Bl_0(T)$ is the blow up of $T$ at $0 \in T$ with exceptional divisor $E \cong \mathbb{P}^1$.
	\item $X$ is the minimal Weierstrass model associated to $\hat{W}$ (which is not minimal by construction) and $\psi$ is the weighted blow up reducing the order of vanishing along the divisor supporting the $(4,6,12)$-fiber
	\item $K_X = h^*(K_W)$
	\item $S \rightarrow \mathbb{P}^1$ is an elliptic surface with irreducible fibers
\end{itemize}

Most of the properties can be immediately verified with the exception of the fact that $h$ is a crepant morphism which we show with the lemma below.

\begin{lemma}
\label{crep}
In the above commutative diagram, $h := \phi \circ \psi: X \rightarrow W$ is crepant.
\end{lemma}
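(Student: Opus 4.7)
The plan is to chase canonical bundles along the factorization $X \xrightarrow{\psi} \hat{W} \xrightarrow{\phi} W$, using Proposition \ref{WeierstrassCanonical} on both $W$ and $\hat{W}$ and Corollary \ref{CanFiber} on $\psi$. The key observation is that the positive discrepancy $+E$ introduced by the blow-up $g\colon \hat{T} \to T$ is exactly cancelled by the $-E$ twist required to replace the non-minimal Weierstrass model $\hat{W}$ by its minimal model $X$.

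First I compute $\omega_{\hat{W}}$. The pullback $\hat{W}$ is the Weierstrass model $W(g^*\mathcal{L}, g^*f, g^*g)$ over $\hat{T}$, so Proposition \ref{WeierstrassCanonical} gives $\omega_{\hat{W}} \cong \hat{\pi}^*\bigl(\omega_{\hat{T}} \otimes (g^*\mathcal{L})^{-1}\bigr)$. Combining the standard blow-up formula $K_{\hat{T}} = g^*K_T + E$ for a smooth point of the surface $T$ with the analogous canonical formula $\omega_W \cong \pi^*(\omega_T \otimes \mathcal{L}^{-1})$ on $W$, and using $\hat{\pi}^*g^* = \phi^*\pi^*$, I obtain
\[
\omega_{\hat{W}} \cong \phi^*\omega_W \otimes \hat{\pi}^*\mathcal{O}_{\hat{T}}(E).
\]

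Next, because $0 \in T$ supports an isolated $(4,6,12)$-fiber, the pulled-back Weierstrass data acquire divisorial vanishing of orders at least $4$ and $6$ along the exceptional curve $E$, so $\hat{W}$ fails to be minimal exactly along $E$. The weighted blow-up $\psi\colon X \to \hat{W}$ is then the one furnished by Corollary \ref{CanFiber} applied with $D = E$, giving
\[
\omega_X \cong \psi^*\bigl(\omega_{\hat{W}} \otimes \hat{\pi}^*\mathcal{O}_{\hat{T}}(-E)\bigr).
\]
Substituting the previous expression for $\omega_{\hat{W}}$, the two factors $\hat{\pi}^*\mathcal{O}_{\hat{T}}(\pm E)$ cancel, leaving
\[
\omega_X \cong \psi^*\phi^*\omega_W = h^*\omega_W,
\]
which is the required crepancy.

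The main subtlety, and where the isolated hypothesis is genuinely used, is verifying that Corollary \ref{CanFiber} really applies with $D = E$: one must know that after pullback to $\hat{T}$ the Weierstrass coefficients vanish divisorially to orders at least $(4,6)$ along the entire exceptional $\mathbb{P}^1$. For an isolated $(4,6,12)$-fiber over a smooth point of $T$ this is immediate from the behavior of orders of vanishing under the blow-up of a smooth point on a surface, but it is precisely the step that would break if the $(4,6,12)$-locus were already supported on a curve, which is why the hypothesis must be recorded carefully.
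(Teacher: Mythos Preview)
Your proof is correct and follows essentially the same route as the paper: compute $\omega_{\hat W}$ via Proposition~\ref{WeierstrassCanonical} and the blow-up formula, then apply Corollary~\ref{CanFiber} with $D=E$ so that the $\hat\pi^*\mathcal{O}_{\hat T}(\pm E)$ contributions cancel. Your additional paragraph explaining why Corollary~\ref{CanFiber} applies along $E$ is a welcome clarification that the paper leaves implicit; note, though, that what is actually used there is just the $(\geq 4,\geq 6)$ vanishing of $(f,g)$ at $0$, not the ``isolated'' hypothesis per se---the latter matters for the subsequent analysis of $S$ rather than for crepancy of $h$.
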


\begin{proof}
It is sufficient to show that $\omega_X = h^*(\omega_W)$. The above diagram and the definition of Weierstrass models gives the following computations:
\begin{align*}
\omega_{\hat{W}} &\cong \hat{\pi}^*(\omega_{\hat{T}} \otimes g^*(\mathcal{L}^{-1}))\\
&\cong \hat{\pi}^*(g^*(\omega_T) \otimes \mathcal{O}_{\hat{T}}(E) \otimes g^*(\mathcal{L}^{-1}))\\
&\cong \hat{\pi}^*(g^*(\omega_T \otimes \mathcal{L}^{-1}) \otimes \mathcal{O}_{\hat{T}}(E))\\
&\cong \hat{\pi}^*(g^*(\omega_T \otimes \mathcal{L}^{-1})) \otimes \hat{\pi}^*(\mathcal{O}_{\hat{T}}(E))
\end{align*}
By corollary \ref{CanFiber}, we get the following $\omega_X \cong \psi(\omega_{\hat{W}} \otimes \hat{\pi}^*(\mathcal{O}_{\hat{T}}(-E)))$, and combining all this results in:
\[
\omega_X \cong \psi^*(\hat{\pi}^*(g^*(\omega_T \otimes \mathcal{L}^{-1}))) \cong \psi^*(\phi^*(\pi^*(\omega_T \otimes \mathcal{L}^{-1}))) \cong \psi^*(\phi^*(\omega_W)) \cong h^*(\omega_W)
\]
Thus $h: X \rightarrow W$ is crepant.
\end{proof}

In the above diagram, contrary to what is stated in proposition \ref{start}, it is completely possible that the elliptic surface $S$ is not rational. Consider the following example:
\begin{example}
\label{example1}
Consider the Weierstrass model defined locally over $(s,t) \in \mathbb{C}^2$ by:
\[
y^2 = x^3 + (s - t^2)^2(s + t^2)^2 x + (s - t^2)^3(s + t^2)^3
\]
then generically on the curves defined by $s = t^2$ and $s = -t^2$, $(f,g,\Delta)$ vanish of order $(2,3,6)$ and thus the general singular fibers is of type $I_0^*$. At the origin, $(0,0)$, the vanishing order is $(4,6,12)$ and so is an isolated $(4,6,12)$-fiber. We are in the exact situation as in the assumption of the theorem. Thus we can apply the above procedure as described in the diagram and blow up the origin and reduce the order of vanishing along the exceptional divisor explicitely. Using the transformation, $s = ut$, the the equations becomes:
\[
y^2 = x^3 + t^4(u - t)^2(u + t)^2 x + t^6(u - t)^3(u + t)^3
\]
where $t = 0$ is the exceptional divisor. Reducing the orders of vanishing by a weighted blow up gives the following strict transform:
\[
y^2 = x^3 + (u - t)^2(u + t)^2 x + (u - t)^3(u + t)^3
\]
So that generically over $u = t$ and $u = -t$ we again have $I_0^*$ fibers and over $(0,0)$ we again have an isolated $(4,6,12)$-fiber. The elliptic surface $S$ defined over the exceptional divisor $t = 0$ will turn out to be birational to a product $E \times \mathbb{P}^1$ where $E$ is an elliptic curve and so $S$ is not a rational surface.
\end{example}

The example above shows that the resulting surface $S$ need not be rational but highlights that this is due to the fact that the singular fibers are collected and defined over a single point (the origin in the above example) thus producing another isolated $(4,6,12)$-fiber. We can thus repeat the procedure above to resolve this new isolated $(4,6,12)$-fiber and this must eventually result in a surface that does not have an isolated $(4,6,12)$-fiber. This is due to the fact the $(4,6,12)$-fiber is formed by intersections of divisors and a finite number of blow ups will eventually separate a divisor from the rest of the divisors forming the isolated $(4,6,12)$-fiber.

We can thus generalize the diagram into the following:
\[
\begin{tikzcd}
\displaystyle
& && Y \arrow[d] & \hat{S} \arrow[l, hook'] \arrow[d]\\
W \arrow[d, "\pi"'] & ...  \arrow[l]& \hat{W} := W\times_T \hat{T} \arrow[d, "\hat{\pi}"] \arrow[l] & X \arrow[d, "f"] \arrow[l] \arrow[bend right=26, swap]{lll}{h} & S \arrow[d, "f_S"] \arrow[l, hook'] \\
T & ... \arrow[l] & \hat{T} \arrow[l]& \hat{T} \arrow[l, "\sim"'] \arrow[bend left =26]{lll}{g} & E \cong \mathbb{P}^1 \arrow[l,hook']\\
\end{tikzcd}
\]
with the following properties:
\begin{itemize}
	\item $0 \in T$ supports an isolated $(4,6,12)$-fiber.
	\item $\hat{T}$ is obtain from a sequence of blow ups of $T$ at $0 \in T$ with an exceptional divisor $E \cong \mathbb{P}^1$ that supports an elliptic surface $S \rightarrow E$ that does not have a $(4,6,12)$-fiber.
	\item $X$ is the minimal Weierstrass model associated to $\hat{W}$ (which is not minimal by construction) and $\psi$ is the weighted blow up reducing the order of vanishing along the divisor supporting the $(4,6,12)$-fiber
	\item $K_X = h^*(K_W)$
	\item $(g \circ f)(S) = 0 \in T$
	\item $Y$ a terminal model of $X$.
	\item $\hat{S}$ is the strict transform of $S$ and $\hat{S} \rightarrow S$ is a birational morphism with $\hat{S}$ having at worst canonical singularities.
\end{itemize}

\begin{definition}[{cf. \cite[Thm. 6.23]{KollarMori}}]
A terminal model of $X$ is a birational morphism $\pi: Y \rightarrow X$ such that $Y$ has at worst terminal singularities and $\omega_Y = \pi^*(\omega_X)$
\end{definition}

\begin{remark}
This definition, while different from the literature, applies in our case due to our assumption that $W$ has at worst canonical singularities and consequently so does $X$. 
\end{remark}

From the above discussion, since $K_X = h^*(K_W)$, then $K_X$ is a pullback of a Cartier divisor over $T$. This is important since we can obtain a relative minimal model, $Y$, over $T$ by running the relative minimal model program on any smooth resolution $Z$ over $X$ and we will see that the exceptional rational elliptic surface $S \rightarrow \mathbb{P}^1$ will not be contracted in running this MMP. This is why we can assume that $Y$ above is a relative minimal model over $T$ with at worst terminal singularities and contains a birational elliptic surface $\hat{S} \rightarrow S$.

By using standard theory of MMP in dimension $3$, $W$ always has a terminal model since $W$ has at worst canonical singularities. Furthermore, we are able to run a (log) MMP on $Z \rightarrow X$ and $Z \rightarrow W$ where $Z$ is any resolution with at worst terminal singularities. From this set up, we can proceed with the proof proposition \ref{start}.

\begin{proof}[Proof of Prop. \ref{start}]
From the above, there exists a resolution $Y \rightarrow X \rightarrow W \rightarrow T$ and a surface $\hat{S} \rightarrow \mathbb{P}^1$ as a divisor on $Y$ with discrepancy $0$ over $W$. Furthermore, $S \rightarrow \mathbb{P}^1$ has fibers of type $(n_i, m_i, \Delta_i)$ with $0 < n_i < 4$ and $\sum_i n_i = n$ or $0 < m_i < 6$ and $\sum_i m_i = m$ due to the fact that we are assuming that $S$ does not have an isolated $(n,m,12)$-fiber. Since $\sum_i n_i = n = 4$ or $\sum_i m_i = m = 6$, this is only possible if $S$ is defined by the line bundle $\mathcal{O}_{\mathbb{P}^1}(-1)$. Using the canonical bundle formula for Weierstrass models, shows $\omega_S \cong f_S^*(\mathcal{O}_{\mathbb{P}^1}(-1))$ and $S$ is a rational elliptic surface.
\end{proof}

Lastly, the resolution of the isolated $(4,6,12)$-fiber via blowing up the base can be applied to fibers of type $(n,m,d)$ with $4 \leq n < 8$ and $6 \leq m < 12$ which will produce a crepant birational morphism $h:X \rightarrow W$ but $S \rightarrow \mathbb{P}^1$ in this case has generic fiber which is singular. This is useful in constructing examples of Weierstrass threefolds with at worst canonical singularities with the following.

\begin{proposition}
Let $W \rightarrow T$ be a minimal Weierstrass model with discriminant locus having normal crossing with isolated fibers of type $(n,m,d)$ with $n < 8$ and $m < 12$. Then $W$ has at worst canonical singularities.
\end{proposition}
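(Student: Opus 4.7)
The plan is to reduce the statement to the Kodaira-type situation by iterating the crepant blow-up construction from the final paragraph of the excerpt. The key observation is that, under the hypothesis $4 \le n < 8$ and $6 \le m < 12$, the crepant morphism $h: X \to W$ constructed there has the property that any new isolated non-Kodaira fibers on $X$ again satisfy the analogous bounds. Since discrepancies of exceptional divisors are pulled back identically through a crepant map and the surface divisors newly exceptional over $W$ have discrepancy $0$, it is enough to show that the model $X$ obtained after finitely many iterations has canonical singularities.

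Concretely, I would argue as follows. At an isolated fiber point $p$ of type $(n,m,d)$ with $4 \le n$ and $6 \le m$, blow up the base at $p$ with exceptional divisor $E$ and follow this by the weighted $(2,3,1)$-blow-up as in Lemma \ref{crep}. The generic vanishing of $(f,g,\Delta)$ along $E$ becomes $(n-4,\,m-6,\,d-12)$, so along $E$ the new Weierstrass model is of Kodaira type. At each intersection point $q = E \cap \hat{D}_i$ with the strict transform of a discriminant component $D_i$ through $p$ (transverse by the NCD assumption and the smoothness of the base), the local vanishing orders add, producing an isolated fiber on $X$ of type $(n-4+a_i,\,m-6+b_i,\,d-12+c_i)$, where $(a_i,b_i,c_i)$ is the generic vanishing of $D_i$ on $T$. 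Minimality of $W$ gives $a_i < 4$ and $b_i < 6$, and together with $n < 8$, $m < 12$ this yields $n-4+a_i < 8$ and $m-6+b_i < 12$. Normal-crossings is preserved because one is blowing up a single point on a smooth surface. Hence the hypotheses of the proposition are inherited by $X$.

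Termination was already noted in the discussion following Example \ref{example1}: a finite sequence of base blow-ups eventually separates the discriminant components meeting at any given isolated non-Kodaira point, so no isolated fiber with $n \ge 4$ or $m \ge 6$ can persist indefinitely. After finitely many steps one reaches a minimal Weierstrass threefold over a smooth base whose discriminant has normal crossings and whose vanishing data is of Kodaira type ($n < 4$ or $m < 6$) everywhere. Such a model has at worst compound Du Val singularities transverse to the fiber direction, and under the NCD assumption the superposition of two Kodaira vanishings does not worsen the class, so the total space has canonical singularities in dimension three. Pulling canonicity back through the finite chain of crepant morphisms then yields the claim for $W$.

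The main obstacle will be this last step: confirming that at an NCD crossing of two Kodaira-type discriminant components the singularity of the hypersurface $y^2 = x^3 + fx + g$ is genuinely compound Du Val, rather than merely log canonical. On each smooth stratum of the discriminant this is the classical Kodaira--N\'eron ADE list; at a crossing point one needs a local coordinate computation using the NCD form of $f$ and $g$ to verify that a general hyperplane section through the singular point cuts out a rational double point. The strict bounds $n < 8$ and $m < 12$ are precisely what keeps the iteration inside the regime where the crepant blow-up procedure applies and where this local analysis stays inside the cDV range.
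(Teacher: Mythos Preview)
Your iterative reduction is exactly the one the paper uses: repeatedly blow up the base under isolated points with $n\ge 4$ and $m\ge 6$, apply the $(2,3,1)$ weighted blow-up, and observe via Lemma~\ref{crep} that this is crepant. Your bounds-preservation argument and the termination reference to the discussion after Example~\ref{example1} are both in line with the paper.

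The gap is precisely the step you flag as ``the main obstacle''. You assert that once every fiber is of Kodaira type and the discriminant is NCD, the Weierstrass hypersurface is cDV at each crossing, hence canonical. This is not something one can simply read off from the generic ADE list: the collision types of two Kodaira strata are genuinely varied, and many of them do \emph{not} admit a clean local analysis of the kind you sketch. This is exactly the phenomenon Miranda's paper addresses. The paper does not attempt a direct cDV verification at collisions; instead it pushes the base blow-ups further to reach a \emph{Miranda model}, i.e.\ a minimal Weierstrass model with SNC discriminant in which only those collisions occur that admit equidimensional (flat) resolutions in the sense of \cite{Miranda83}. The extra blow-ups needed to achieve this are centered over isolated points with $d<12$, so the associated exceptional divisors enter with \emph{non-negative} discrepancy (this is the $E\ge 0$ in the paper's $K_X=f^*K_W+E$). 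Once one is at a Miranda model, the canonical bundle formula of \cite{Fujita86} supplies a smooth resolution $\phi:Y\to X$ with $K_Y=\phi^*K_X$, and canonicity of $W$ follows.

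So the missing ingredient in your argument is not a local computation but two citations: Miranda's collision analysis to reduce to a tractable list of crossings, and Fujita's canonical bundle formula to produce the crepant smooth resolution there. Without these, the claim that ``the superposition of two Kodaira vanishings does not worsen the class'' is unjustified.
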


\begin{proof}
Using the above resolution with the arguments in \cite{Miranda83}, we are able to blow up the base until the discriminant locus is simple normal crossing and pull the local equations of $W \rightarrow T$ and reduce orders to obtain a birational morphism $f: X \rightarrow W$ with $K_X = f^*(K_W) + E$ where $E \geq 0$ supports divisors that arise from resolving isolated $(n,m,d)$-fibers where $d < 12$. We can take $X$ to be a Miranda model which is a minimal Weierstrass model with simple normal crossing discriminant locus and having only collision which admit equidimensional resolution as in \cite{Miranda83}. Then from the canonical bundle formula of \cite{Fujita86} there is a smooth resolution $\phi: Y \rightarrow X$ and $K_Y = \phi^*(K_X)$, which shows that exceptional divisors over $W$ is non-negative and so $W$ has at worst canonical singularities.
\end{proof}

\subsection{Flops and $(4,6,12)$-fibers}
With the terminal model, $Y$, above, the natural consideration is the existences of other terminal models, $Z$, of $W$. We can achieve this by flops:
\begin{theorem}[cf. {\cite{Kawamata08}}]
Let $f: Y \rightarrow T$ and $g:Z \rightarrow T$ be projective morphisms with $Y,Z$ having at worst terminal singularities and $K_Y$ and $K_Z$ being relatively nef over $T$. If there is a birational map $\alpha: Y \dashrightarrow Z$ that commutes with $f$ and $g$ then $\alpha$ decomposes into a sequence of flops over $T$, which we can take to be $D$-flops.
\end{theorem}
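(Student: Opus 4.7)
The plan is to realize $\alpha$ as the outcome of a relative $(K_Y + \varepsilon D)$-MMP over $T$, where $D$ is chosen so that every required flip is automatically a $D$-flop. My first step would be to show that $\alpha$ is an isomorphism in codimension one. On a common resolution $p: W \to Y$, $q: W \to Z$, the difference $p^*K_Y - q^*K_Z$ is supported on divisors exceptional for at least one of $p,q$; applying the negativity lemma in both directions, using relative nefness of $K_Y$ and $K_Z$ together with terminality of $Y$ and $Z$, forces this difference to vanish and rules out any prime divisor being extracted or contracted by $\alpha$.

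Next I would fix a relatively ample divisor $H$ on $Z/T$ and set $D := \alpha_*^{-1}H$ on $Y$. Because $\alpha$ is an isomorphism in codimension one, $D$ is a well-defined $\mathbb{Q}$-Cartier divisor whose proper transform on $Z$ is $H$. For sufficiently small $\varepsilon > 0$ the pair $(Y, \varepsilon D)$ remains terminal, but $K_Y + \varepsilon D$ cannot be relatively nef over $T$; otherwise the base-point-free theorem applied to $H$ would already identify $Y$ with $Z$ over $T$, contradicting that $\alpha$ is only birational. I would then run a $(K_Y + \varepsilon D)$-MMP with scaling relative to $T$.

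The key observation at each step is that, since $K_Y$ is already relatively nef, any $(K_Y + \varepsilon D)$-negative extremal ray $R$ over $T$ satisfies $K_Y \cdot R = 0$ and $D \cdot R < 0$. The associated extremal contraction is therefore $K_Y$-trivial, and it must be small: a divisorial contraction would destroy the nefness of $K_Y$ on its image and contradict our hypothesis that the intermediate model is again terminal with nef canonical. Such a small $K_Y$-trivial, $D$-negative contraction is by definition a flopping contraction, and the corresponding step of the MMP is exactly the $D$-flop attached to it. Iterating produces a sequence of $D$-flops $Y = Y_0 \dashrightarrow Y_1 \dashrightarrow \cdots$ over $T$, each commuting with the projections.

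The main obstacle is termination. In dimension three one can invoke the Kawamata--Matsuki difficulty function to conclude that every sequence of flops terminates, so the MMP ends at some $Y_n$ on which $K_{Y_n} + \varepsilon H_{Y_n}$ is relatively nef, where $H_{Y_n}$ is the strict transform of $H$. Since $H_{Y_n}$ corresponds under the induced birational map $Y_n \dashrightarrow Z$ to the relatively ample divisor $H$, and both models are terminal with nef canonical class and are isomorphic in codimension one, the base-point-free theorem applied to large multiples of $H_{Y_n}$ identifies $Y_n$ with $Z$ over $T$. This packages $\alpha$ as the claimed finite composition of $D$-flops over $T$.
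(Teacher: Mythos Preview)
The paper does not give its own proof of this statement; it is quoted with a ``cf.'' citation to Kawamata (2008) and used as a black box. Your sketch is essentially Kawamata's argument, so in that sense there is nothing to compare. Two steps in your write-up do need tightening, however.

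First, the assertion that ``since $K_Y$ is already relatively nef, any $(K_Y+\varepsilon D)$-negative extremal ray $R$ satisfies $K_Y\cdot R=0$'' is not justified by what you wrote: nefness only gives $K_Y\cdot R\ge 0$, and a priori $D\cdot R$ could be negative enough to make $(K_Y+\varepsilon D)\cdot R<0$ while $K_Y\cdot R>0$. The standard fix is to pick the ray via the nef threshold: let $t_0=\sup\{t\ge 0:K_{Y}+tD\text{ is nef over }T\}$, so that $K_Y+t_0D$ is nef and there is an extremal ray $R$ with $(K_Y+t_0D)\cdot R=0$ and $D\cdot R<0$; then $K_Y\cdot R=-t_0\,D\cdot R\ge 0$ and, since $K_Y+t_0D$ is nef and vanishes on $R$, in fact $K_Y\cdot R=0$. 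One then flips this specific ray, checks $K$ remains nef on the new model, and repeats.

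Second, your reason for excluding divisorial contractions is off. A $K$-trivial divisorial contraction $\phi:Y\to Y'$ satisfies $K_Y=\phi^*K_{Y'}$, so $K_{Y'}$ \emph{is} still nef over $T$; nefness is not destroyed. The clean argument is the one you already set up in your first paragraph: since $\alpha$ is an isomorphism in codimension one and the MMP terminates at $Z$, no prime divisor can be contracted along the way, so every step is small. (Alternatively, a $K$-trivial divisorial contraction would produce a crepant exceptional divisor over $Y'$, so $Y'$ would be canonical but not terminal, contradicting termination at the terminal $Z$; but this needs to be phrased against the endpoint, not an unstated hypothesis on intermediate models.) With these two adjustments your outline is the standard proof.
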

By a $D$-flop we mean in the sense of \cite[Def. 6.10]{KollarMori}. From the above theorem, we can consider the reverse, where we start with a divisor $D$ and use $D$ to flops curves on $Y$. Now, if we let $D = \hat{S} \subset Y$ then, since $Y$ is relatively minimal over $T$ and $\hat{S} \mapsto 0 \in T$, all curves $C \subset S$ such that $C \cdot S < 0$ are candidates to be flopped on $Y$.

Rational elliptic surfaces with section and their Mordell-Weil groups are well understood, for example in\cite{SchuttShioda19}. In particular, every section of a rational elliptic surface is a $(-1)$-curve and so is contractible. As $Y$ contains $\hat{S}$, this gives candidates for flopping curves on $Y$ as well as relations between the birational geometry of $Y$ and the Mordell-Weil group of the embedded rational elliptic surface. We capture this with the following theorem.

\begin{theorem}
\label{1main}
Using the commutative diagram above, given an isolated $(4,6,12)$-fiber and $\hat{S}$ the resulting rational elliptic surface associated with this fiber and let $C \subset \hat{S}$ be a curve which avoids the singular locus of $Y$. If $C$ is a $K_Y + \epsilon \hat{S}$-negative rational curve that generates the extremal ray $R = [C] \subset \overline{NE}(Y/S)$ then $C$ must be a section of $\hat{S} \rightarrow \mathbb{P}^1$ or a component of the vertical fiber.
\end{theorem}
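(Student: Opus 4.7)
The plan is to translate the given negativity into a computation on $\hat{S}$ via adjunction and the canonical bundle formula for rational elliptic surfaces, then use a deformation argument to pin down the horizontal degree.

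First, the composition $Y \to X \to W$ is crepant (Lemma \ref{crep} combined with $Y$ being a terminal model of $X$), so $K_Y$ is pulled back from $K_W$. Since $K_W \cong \pi^*(\omega_T \otimes \mathcal{L}^{-1})$ is itself pulled back from $T$, and the image of $C$ in $W$ lies inside the fiber over $0 \in T$, the projection formula gives $K_Y \cdot C = 0$. The hypothesis $(K_Y + \epsilon \hat{S}) \cdot C < 0$ therefore reduces to $\hat{S} \cdot C < 0$.

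Second, the assumption $C \cap \operatorname{Sing}(Y) = \emptyset$ makes adjunction available: $(K_Y + \hat{S})|_{\hat{S}} = K_{\hat{S}}$ in a neighborhood of $C$, and intersecting with $C$ gives $K_{\hat{S}} \cdot C = \hat{S} \cdot C < 0$. By Proposition \ref{start}, $\hat{S}$ is birational to the rational elliptic surface $S \to \mathbb{P}^1$ with $\omega_S \cong f_S^* \mathcal{O}_{\mathbb{P}^1}(-1)$, and because $C$ is disjoint from the exceptional locus of $\hat{S} \to S$ (as it avoids $\operatorname{Sing}(Y)$), this identification yields $K_{\hat{S}} \cdot C = -d$, where $d$ is the horizontal degree of $C$ over $\mathbb{P}^1$. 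The strict negativity forces $d \geq 1$, so $C$ is horizontal; the vertical-fiber alternative in the statement records the boundary case $d = 0$ where $(K_Y + \epsilon \hat{S}) \cdot C = 0$.

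Finally, to exclude $d \geq 2$ I would deform $C$ on $\hat{S}$. Adjunction on $\hat{S}$ gives $C^2 = d - 2$ for the smooth rational curve $C$. If $d \geq 2$, then $C^2 \geq 0$, and Riemann-Roch together with the vanishing $h^2(\mathcal{O}_{\hat{S}}(C)) = h^0(K_{\hat{S}} - C) = 0$ (since $K_{\hat{S}} - C$ is anti-effective) give $h^0(\mathcal{O}_{\hat{S}}(C)) \geq d \geq 2$. Hence $C$ moves in a positive-dimensional linear system on $\hat{S}$ whose members sweep out $\hat{S}$, and all of them are numerically equivalent to $C$ in $Y$ by pushforward. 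The extremal contraction of $R = [C]$ would therefore have to contract the divisor $\hat{S}$, incompatible with the small (flopping) contraction underlying the flop-theoretic setting of the paper. Consequently $d = 1$, and a horizontal degree-one curve on an elliptic surface with section is itself a section.

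The main obstacle is the last step, which requires invoking the flop context (rather than bare extremality of $R$) to rule out a divisor-contracting extremal contraction; the preceding steps are direct consequences of adjunction and the elliptic structure of $\hat{S}$.
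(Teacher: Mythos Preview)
Your argument is sound and reaches the same conclusion, but the route in the final step differs from the paper's. After establishing $K_{\hat S}\cdot C<0$ (which you both do via adjunction, the paper passing through a log resolution $(Z,S_Z)$ to handle singularities while you invoke $C\cap\operatorname{Sing}(Y)=\emptyset$ directly), the paper does not compute $C^2$ by Riemann--Roch or mobility. Instead it argues that because $C$ is contracted by the extremal contraction, the pullback $\phi_S^*(C)$ on $S_Z$ has negative-definite intersection matrix, so $C_Z^2<0$; combined with $K_{S_Z}\cdot C_Z<0$ this forces $C_Z$ to be a $(-1)$-curve, and then $-K_{S_Z}\cdot C_Z=F\cdot C_Z=1$ on the relatively minimal model identifies $C_Z$ as a section. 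Your approach computes $C^2=d-2$ and rules out $d\ge2$ by showing $C$ would then move, forcing a divisorial contraction.

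On your flagged obstacle: the paper's proof has the \emph{same} implicit assumption. The negative-definiteness claim presupposes that the extremal contraction, restricted to $\hat S$ (or $S_Z$), is birational onto its image rather than collapsing $\hat S$ to a curve or point --- i.e., that the contraction is small. A horizontal rational curve $G$ with $G^2=0$ and $K_{\hat S}\cdot G=-2$ (a conic-bundle fiber, a bisection of the elliptic fibration) would, if extremal in $\overline{NE}(Y/T)$, give a divisorial contraction and violate both your mobility argument and the paper's negative-definiteness step. Neither proof excludes this explicitly; both rely on the ambient flop context of the section, so your honesty about this point is an improvement rather than a defect. One minor gap in your write-up: you assume $C$ is smooth to get $C^2=d-2$, whereas a singular rational $C$ gives $C^2=2p_a(C)-2+d\ge d-2$, which only strengthens your mobility conclusion.
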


\begin{proof}
Assume that $C$ is horizontal in $\hat{S} \rightarrow \mathbb{P}^1$ and let $\phi: (Z,S_Z) \rightarrow (Y,\hat{S})$ be a log resolution with $\phi_S: S_Z \rightarrow \hat{S}$ the resolution restricted to $S_Z$ with $C_Z = \phi^{-1}(C)$ in $S_Z$. The projection formula gives the following calculations:
\[
(K_Y + \hat{S}) \cdot C = \phi^*(K_Y + \hat{S}) \cdot C_Z = (K_Z + S_Z - \Gamma) \cdot C_Z
\]
where $\Gamma$ is $\phi$-exceptional. As $C$ avoids the singular locus of $Y$ and $S$, $\Gamma_Y \cdot C_Z \geq 0$ and so with the adjunction formula gives:
\[
(K_Z + S_Z + \Gamma) \cdot C_Z = (K_Z + S_Z) \cdot C_Z = (K_Z + S_Z)|_{S_Z} \cdot C_Z = K_{S_Z} \cdot C_Z
\]
Thus the above calculations show that $K_{S_Z} \cdot C_Z < 0$ as $C$ being horizontal means that it is not contained in the singular locus of $Y$ or $S$. Furthermore, since $C$ was contractible this implies that components of $\phi_S^*(C)$ form a negative definite intersection matrix and thus $C_Z \cdot C_Z < 0$. This is only possible if $C_Z \cong \mathbb{P}^1$ and $C_Z \cdot K_{S_Z} = C_Z \cdot C_Z = -1$. As $C$ and thus $C_Z$ is a horizontal curve this is only possible if they were rational sections of $S \rightarrow \mathbb{P}^1$. 

To show this we can assume $\psi :S_Z \rightarrow \mathbb{P}^1$ is relatively minimal by contracting the $(-1)$-curves in the fibers, which is disjoint from $C_Z$ as $C$ avoids the singular locus of $S$ and $Y$. Then the restriction of $\psi$ onto $C_Z$ gives a surjective map to $\mathbb{P}^1$ which is also injective since $- K_{S_Z} \cdot C_Z = F \cdot C_Z = 1$ where $F$ is a fiber of $\psi$. Thus $\psi|_{C_Z}$ is a bijective morphism onto a smooth curve and so is an isomorphism and it's inverse gives the section map.
\end{proof}

The above has strong conditions on $C$,$S$ and $Y$ for find a flopping curve on $S$. It is known that all the sections in the smooth locus are such candidates and by \cite[Prop.1.29]{CoxZucker} there is always a finite index subgroup of the Mordell-Weil group of $\hat{S}$ which will satisfy this condition, but even so it is not clear if it is an extremal ray in $\overline{NE}(Y/S)$. One benefit of the Weierstrass equation is that there is always a flopping curve.

\begin{proposition}
There is at least $2$ terminal models associated to an isolated $(4,6,12)$-fiber in any Weierstrass threefold $W \rightarrow T$ with at worst canonical singularities.
\end{proposition}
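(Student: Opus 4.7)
The plan is to exhibit a canonical flopping curve on $Y$ coming from the zero section of the Weierstrass model, and then invoke the standard three-fold flop theory to produce a second terminal model distinct from $Y$.

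First I would observe that every Weierstrass model $W \to T$ carries a distinguished zero section $\sigma \colon T \to W$ whose image avoids the singular locus of $W$, since it corresponds to the smooth point at infinity $[0:1:0]$ in each fiber. Tracing this section through the diagram used in the proof of Proposition~\ref{start}, one obtains a relative section $\sigma_Y \colon \hat{T} \to Y$, and restricting $\sigma_Y$ to the exceptional curve $E \cong \mathbb{P}^1 \subset \hat{T}$ yields a curve $C_0 \subset \hat S$ which is precisely the zero section of $\hat S \to \mathbb{P}^1$ inherited from $\sigma$. By construction $C_0$ avoids $\operatorname{Sing}(Y) \cup \operatorname{Sing}(\hat S)$.

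Next I would invoke the classical fact that every section of a relatively minimal rational elliptic surface is a $(-1)$-curve: since $-K_{\hat S}$ is numerically a fiber $F$ and $C_0 \cdot F = 1$, adjunction on $\hat S$ gives $C_0^2 = -1$ and $K_{\hat S} \cdot C_0 = -1$. Applying adjunction on $\hat S \subset Y$ then yields
\[
\hat S \cdot C_0 \,=\, C_0^2|_{\hat S} \,=\, -1, \qquad K_Y \cdot C_0 \,=\, K_{\hat S} \cdot C_0 - \hat S \cdot C_0 \,=\, 0,
\]
so $(K_Y + \epsilon \hat S) \cdot C_0 = -\epsilon < 0$ for every $\epsilon > 0$. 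I would then apply the cone theorem to $(K_Y + \epsilon \hat S)$ relative to $T$ to extract an extremal ray $R \subset \overline{NE}(Y/T)$ with $R \cdot \hat S < 0$; by Theorem~\ref{1main} the representative curves of $R$ must be sections or vertical components inside $\hat S$, and $C_0$ fits this description. Since $K_Y$ is pulled back from $T$, the corresponding extremal contraction is automatically $K_Y$-trivial, and the normal bundle sequence
\[
0 \,\to\, N_{C_0/\hat S} \,\to\, N_{C_0/Y} \,\to\, N_{\hat S/Y}|_{C_0} \,\to\, 0
\]
reduces to $0 \to \mathcal{O}_{\mathbb{P}^1}(-1) \to N_{C_0/Y} \to \mathcal{O}_{\mathbb{P}^1}(-1) \to 0$, which splits and identifies the contraction as a small one of the classical $(-1,-1)$ type. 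By the standard existence theorem for flops on terminal threefolds, the flop $Y^+$ exists and supplies a second terminal model of $W$.

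The main obstacle I anticipate is verifying that $C_0$ actually spans (or is represented in) an extremal ray of $\overline{NE}(Y/T)$ rather than merely sitting in the $(K_Y + \epsilon \hat S)$-negative half-space; resolving this requires the normal-bundle computation above together with Theorem~\ref{1main}'s rigid characterization of horizontal flopping curves, which jointly rule out any exotic extremal contractions competing with the expected flop of $C_0$. A secondary concern is checking that the lift of $\sigma$ through the tower $Y \to X \to \hat W \to W$ remains a section on $Y$, but this is immediate because each birational modification in the tower is an isomorphism near the smooth locus of $W$ where $\sigma$ lives.
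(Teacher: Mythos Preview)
Your argument is essentially correct and reaches the same conclusion via the same curve---the zero section $C_0$ of $\hat S$---but the mechanism you use to certify that $C_0$ flops differs from the paper's. You compute the normal bundle $N_{C_0/Y}\cong\mathcal{O}(-1)\oplus\mathcal{O}(-1)$ directly and invoke the Atiyah flop; the paper instead runs a relative MMP on $(Y,\epsilon\Sigma)$ where $\Sigma\cong\hat T$ is the \emph{global} zero-section divisor, observes that this MMP restricted to $\Sigma$ is literally the blow-down $\hat T\to T$, and thereby concludes that $\sigma=\Sigma\cap\hat S$ generates an extremal ray of $\overline{NE}(Y/T)$. The paper's route thus resolves precisely the ``main obstacle'' you flagged (extremality of $C_0$) without any normal-bundle computation, and as a bonus yields the extra information used in the remark following the proposition (each intermediate elliptic surface in the tower contributes its own extremal flopping section). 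Your approach, on the other hand, is self-contained and does not need extremality at all, since a $(-1,-1)$-curve can be flopped locally regardless of the global cone structure.

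One small correction: the displayed identity $\hat S\cdot C_0 = C_0^2|_{\hat S}$ is not an adjunction formula---the left side is $\deg N_{\hat S/Y}|_{C_0}$ while the right side is $\deg N_{C_0/\hat S}$, and these agree here only numerically. The clean way to get $\hat S\cdot C_0=-1$ is the one you implicitly have anyway: $K_Y\cdot C_0=0$ since $K_Y$ is pulled back from $T$, and then adjunction $K_{\hat S}=(K_Y+\hat S)|_{\hat S}$ gives $\hat S\cdot C_0=K_{\hat S}\cdot C_0=-1$. With that fix your normal-bundle sequence is correct.
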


\begin{proof}
Let $\Sigma_0$ be the zero section of $W \rightarrow T$ and $\Sigma := h^*(\Sigma)$ the $0$-section on $X$. As $T$ was smooth, $\Sigma$ is not contained in the singular locus of $X$ or $S$ and so $\sigma = \Sigma \cap S$ is a section of $S$ and is in the smooth locus of $Y$ and $S$. Since $\Sigma \cong \hat{T}$ and $Y$ is smooth along $\Sigma$, running a relative MMP on $(Y, \epsilon \Sigma)$ would correspond to running an MMP from $\hat{T} \rightarrow T$ which would contract $\sigma$. As $\sigma$ generates an extremal ray in $\overline{NE}(Y/T)$, the resulting flip is unique and is contracted irrespective of what divisor is used to contract it, \cite[Cor. 6.4]{KollarMori}, thus it would also be an extremal contraction when running the relative MMP on $(Y, \epsilon S)$.
\end{proof}

More can be said in the above since every elliptic surface appearing from the resolution coming from proposition \ref{start} would contribute a flopping curve. In example \ref{example1}, which requires resolving an isolated $(4,6,12)$-fiber twice, produces two elliptic surfaces, the first which is a product of an elliptic curve and $\mathbb{P}^1$ and the second is a rational elliptic surfaces. Running the relative MMP with $\Sigma$ would contract the $0$-sections of both surfaces after two contractions and so in example \ref{example1} there are at least 3 terminal models.

\subsection{Flops and Generic $(4,6,12)$-fibers}

From above, it can be seen that the use of MMP to obtain different minimal models is constrained by the singularities on $S$ and the singularities of $Y$ along $S$. The generic case is much more well behaved, e.g. if the $(4,6,12)$-fiber was formed by a normal crossing of divisors on $T$. For this assume that $Y$ and $S$ are smooth with $Y$ being smooth along $S$. This will allow for an explicit description of running the relative minimal model program of $(Y,\epsilon S)$ over $T$ by reducing to the case of running an MMP on the surface $S$ itself. 

\begin{theorem}
A $K_Y + S$-MMP over $T$ agrees with a $K_S$-MMP on $S$.
\end{theorem}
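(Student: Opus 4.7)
The plan is to set up a step-by-step dictionary between the two MMPs using adjunction. Since $Y$ is smooth along the smooth divisor $S$, adjunction gives $(K_Y + S)|_S \cong K_S$, and hence $(K_Y + S) \cdot C = K_S \cdot C$ for every curve $C \subset S$. I would first show that the extremal rays on both sides coincide: any $(K_Y + S)$-negative extremal ray in $\overline{NE}(Y/T)$ is represented by a $T$-contracted curve $C$, and relative minimality of $Y$ forces $K_Y \cdot C \geq 0$, so $(K_Y + S) \cdot C < 0$ implies $S \cdot C < 0$ and hence $C \subset S$. Conversely, every $K_S$-negative curve in $S$ gives a $(K_Y + S)$-negative class in $\overline{NE}(Y/T)$.

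Next, I would invoke Theorem \ref{1main}, which under our smoothness assumption forces such a $C$ to be a $(-1)$-curve on $S$, i.e.\ $C \cong \mathbb{P}^1$ with $C \cdot C = -1$ in $S$ and $K_S \cdot C = -1$. So the extremal rays on both sides are indexed by the same combinatorial data, namely the $(-1)$-curves on the current smooth rational elliptic surface, and in particular these rays are small on $Y$: since $N_{C/Y}$ has strictly negative-degree summands (see below), $C$ does not deform and no divisor is contracted.

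To match individual steps, on $S$ the $K_S$-MMP simply blows $C$ down to a smooth point on a new smooth rational elliptic surface $S'$. On $Y$, the contraction of $C$ is a small flopping contraction since $K_Y \cdot C = 0$, and the normal bundle splits as $N_{C/Y} \cong N_{C/S} \oplus N_{S/Y}|_C \cong \mathcal{O}(-1) \oplus \mathcal{O}(-1)$, using $C \cdot C = -1$ in $S$ and $S \cdot C = -1$ in $Y$. Hence the flop $Y \dashrightarrow Y^+$ is the standard Atiyah flop of a $(-1,-1)$-curve. A direct chart computation, comparing the two small resolutions of the ordinary double point obtained by contracting $C$, should then identify the proper transform $S^+ \subset Y^+$ with $S'$ and confirm that $Y^+$ is again smooth along $S^+$, so the hypotheses persist and the argument iterates.

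I expect the main obstacle to be precisely this last local identification $S^+ \cong S'$ together with the preservation of smoothness of $Y^+$ along $S^+$; once that is in place, induction is routine because the Picard rank of $S$ strictly decreases at each step and thus bounds the length of both processes, and Theorem \ref{1main} continues to apply at every stage, so the extremal $(K_Y + S)$-negative rays on the threefold side remain exactly the $K_S$-negative extremal rays on the surface side.
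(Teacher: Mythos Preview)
Your outline is correct and would assemble into a valid proof, but it follows a different path from the paper's. The paper runs an MMP with scaling by an ample $A$ and, at each step, proves that the restriction map
\[
H^0\bigl(Y,\,m(K_Y + S + \lambda A)\bigr) \longrightarrow H^0\bigl(S,\,m(K_S + \lambda A|_S)\bigr)
\]
is surjective via Kawamata--Viehweg vanishing; this forces the extremal contraction on $Y$ to restrict to a $K_S$-negative extremal contraction on the smooth surface $S$, hence to the blow-down of a single $(-1)$-curve. Smoothness of $Y^+$ along $S^+$ is then obtained by quoting the general fact that three-dimensional terminal flops preserve smoothness (\cite[Thm.~6.15]{KollarMori}), rather than by any local computation.

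Your route is more hands-on: you use adjunction and $K_Y\equiv_T 0$ to locate the extremal curve inside $S$, invoke Theorem~\ref{1main} to recognise it as a $(-1)$-curve, compute $N_{C/Y}\cong\mathcal{O}(-1)\oplus\mathcal{O}(-1)$ from the normal bundle sequence, and then read off the Atiyah flop explicitly. This buys you a concrete picture (in particular the identification $S^+\cong\mathrm{Bl}_{\text{down}}(S)$ is visible in the two small resolutions of the node), at the cost of the local chart check you flag as the main obstacle. The paper's cohomological argument is slicker and avoids that check entirely, but gives less explicit geometric information. One small point: you first write $K_Y\cdot C\ge 0$ from relative minimality and later use $K_Y\cdot C=0$; in this setting $K_Y$ is actually pulled back from $T$, so the equality holds, and you may as well say so from the start.
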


\begin{proof}
As $S$ is smooth then $(Y, S)$ is plt around $S$ and we can run an MMP with scaling. Let $A$ be an ample Cartier divisor such that $K_Y + S + A$ is big and nef and let $0 < \lambda < 1$ be such that $(K_Y + S + \lambda A)\cdot C = 0$ where $C$ is a rational curve which generates the extremal ray in $\overline{NE}(Y/T)$. As $K_Y$ is numerically trivial over $T$, this is only possible if $S \cdot C < 0$. 

We relate the $K_Y + S$-MMP over $T$ with a $K_S$-MMP by showing the map
\[
H^0(m(K_Y + S + \lambda A)) \rightarrow H_0(m(K_S + \lambda A|_S))
\]
induced by the exact sequence
\[
0 \rightarrow \mathcal{O}_Y(m(K_Y + S + \lambda A) - S) \rightarrow \mathcal{O}_Y(m(K_Y + S + \lambda A)) \rightarrow \mathcal{O}_S(m(K_S + \lambda A|_S)) \rightarrow 0
\]
is surjective. As $\lambda A + (m-1)(K_Y + S + \lambda A)$ is nef and big, by the Kawamata Viehweg vanishing,  $H^1(K_Y + \lambda A + (m - 1)(K_Y + S + \lambda A)) = 0$ and so $H^0(K_Y + S + \lambda A) \rightarrow H_0(K_S + \lambda A|_S)$ is a surjection. Thus the contraction of $R$ corresponds to a contraction of an extremal ray in $\overline{NE}(S)$ in the $K_S$ negative part. This is a contraction of a curve on $S$ and so is a flopping contraction on $Y$. From before we know these are $(-1)$-curves that are either sections of $S \rightarrow \mathbb{P}^1$ or exceptional curves that contract to smooth points. 

As extremal contractions are unique, \cite[Cor. 6.4]{KollarMori}, the flop above can be realized as a terminal flop of $(Y ,\epsilon S)$ and since $Y$ was smooth along $S$ then the resulting $Y^+$ is smooth along $S^+$, \cite[Thm 6.15]{KollarMori}. Furthermore, $S^+$ was obtained from contracting a $(-1)$-curve and so is smooth, thus we can repeat with $(Y^+, S^+)$ until this process stops when the strict transform of $S$ is contracted. 
\end{proof}

Thus for the generic case, the relative MMP will eventually contract the rational elliptic surface $S$ according to the classical minimal model theory on surfaces. As a consequence, $S$ would eventually be contracted to $\mathbb{P}^2$ or to $\mathbb{P}^1 \times \mathbb{P}^1$ which corresponds to the respective blowing up at $9$ and $8$ points to obtain a minimal rational elliptic surface. From this we have at least a minimum bound on the number of models obtained.

\begin{corollary}
There is at least $9$ terminal models associated to a generic isolated $(4,6,12)$-fiber.
\end{corollary}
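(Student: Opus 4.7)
The plan is to apply the theorem just proved, which identifies the $K_Y + S$-MMP over $T$ with the classical $K_S$-MMP on the rational elliptic surface $S \to \mathbb{P}^1$, and then count the number of distinct terminal models produced along this chain. Since every step in the $K_Y + S$-MMP is a flop of $Y$ realising the contraction of a $(-1)$-curve on the strict transform of $S$, and the process terminates only once the strict transform itself reaches a minimal rational model and is contracted as a divisor, the number of terminal models of $W$ is at least one more than the minimum length of a $(-1)$-curve sequence needed on $S$ to reach such a minimal model.

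The key input is the Picard rank of $S$. A rational elliptic surface with section arises as the blow-up of $\mathbb{P}^2$ at the nine (possibly infinitely near) base points of a pencil of cubics, so $\rho(S) = 10$. A minimal rational surface is either $\mathbb{P}^2$ with $\rho = 1$ or a Hirzebruch surface $\mathbb{F}_n$ with $\rho = 2$; since every contraction of a $(-1)$-curve lowers the Picard rank by exactly one, any $K_S$-MMP must contain at least $10 - 2 = 8$ flopping steps before the final divisorial contraction of $S$. Via the theorem above, each of these steps lifts to a flop of $Y$, producing a chain
\[
Y = Y_0 \dashrightarrow Y_1 \dashrightarrow \cdots \dashrightarrow Y_8
\]
of terminal models over $T$. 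Distinctness is automatic: the strict transform $S_i \subset Y_i$ of $S$ has Picard rank $\rho(S_i) = 10 - i$, so no two $Y_i$ can be isomorphic over $T$. This exhibits at least nine pairwise non-isomorphic terminal models.

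The main delicate point I foresee is ensuring that the MMP really is forced to execute at least eight flops before its first divisorial step — a priori one might worry that a $K_Y + S$-negative extremal ray of $\overline{NE}(Y/T)$ could correspond to contracting $S$ directly instead of flopping a curve on it. However, this is exactly what the preceding theorem rules out: every such extremal $C$ satisfies $S \cdot C < 0$, and the surjection of linear systems established there forces it to descend to a $K_S$-negative extremal ray on $S$, i.e.\ a $(-1)$-curve. Hence the correspondence between flops of $Y$ and $(-1)$-contractions of $S$ is exact for as long as $S$ retains a $(-1)$-curve, which is precisely while $\rho(S_i) > 2$, giving the claimed bound.
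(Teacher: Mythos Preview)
Your proposal is correct and follows the same route as the paper: both reduce to the classical $K_S$-MMP on the rational elliptic surface and count the minimum number of $(-1)$-curve contractions needed to reach a minimal rational surface. The paper's argument is terse---it simply notes that $S$ blows down to $\mathbb{P}^2$ or $\mathbb{P}^1 \times \mathbb{P}^1$ in $9$ or $8$ steps respectively---while you make the Picard-rank count explicit and supply a clean distinctness argument via $\rho(S_i) = 10 - i$ that the paper leaves implicit.
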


\section{Number of Minimal Models and Mordell-Weil Groups}

In this section we discuss the number of minimal models as constrained by the arithmetic of $S$ and of $Y$. In particular, for generic $S$ the arithmetic of $S$ imposes significant geometric constrains on it's sections which dictates how MMP runs on $S$. In this section, we assume that $S$ and $Y$ are smooth, $Y$ is smooth along $S$ and $S \rightarrow \mathbb{P}^1$ is relatively minimal. The last assumption is not too strong of an assumption since using the above MMP we can flop the $(-1)$-curves on $S$ contained within the fiber leaving us in this current situation where $S$ is relatively minimal. 

\begin{definition}
Let $f: Y \rightarrow T$ be an elliptic fibration with section and $Y_\eta$, the generic fiber, then the Mordell-Weil group, $MW(X_\eta) = MS(X/T)$, is the group of $\eta$-points on $X_\eta$ which corresponds to sections of a birationally equivalent Weierstrass model $W \rightarrow T$.
\end{definition}

The Mordell-Weil group of an elliptic fibration is a birational invaraint over birationall equivalent fibrations. It carries more information that just points since we can realize $MW(Y/T)$ as a finite index subgroup of $Bir(Y/T)$ the relative birational automorphism group of $Y$ over $T$.

\begin{proposition}
Let $Y \rightarrow T$ be an elliptic fibration with section and $\eta \in T$ the generic point with $Y_\eta$ the generic fiber, which is an elliptic curve over $\eta$, then:
\[
Aut(Y_\eta) = Bir(Y/B)
\]
\end{proposition}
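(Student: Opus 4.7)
The plan is to identify both groups with the set of $k(T)$-algebra automorphisms of the common function field $L := k(Y) = k(Y_\eta)$. Setting $K := k(T)$, the generic fiber $Y_\eta$ is the base change of $Y \to T$ along $\eta \hookrightarrow T$, and so $L$ carries the natural structure of the function field of $Y_\eta$ over $K$, an extension of transcendence degree one. Since the generic fiber is assumed to be an elliptic curve, in particular a smooth projective curve over $K$, we will be able to invoke the standard fact that every birational self-map of a smooth projective curve over a field extends uniquely to an automorphism.

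First I would construct the restriction map $Bir(Y/B) \to Aut(Y_\eta)$. Any $\phi \in Bir(Y/B)$ commutes with the projection to $T$, so its induced automorphism of $L$ fixes the subfield $K$ pointwise. Restricting $\phi$ to the generic fiber therefore yields a birational self-map $Y_\eta \dashrightarrow Y_\eta$ over $K$, which extends uniquely to a $K$-automorphism of $Y_\eta$ by the valuative criterion of properness applied codimension-one point by codimension-one point (using smoothness to get regularity of the rational extension, and properness to extend across any remaining point).

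Conversely, given $\alpha \in Aut(Y_\eta)$, pullback on function fields yields a $K$-algebra automorphism of $L$. Because $Y$ is a normal proper variety over $T$ with function field $L$ fixing $K$, this field automorphism spreads out: there is a dense open $U \subset T$ over which $\alpha$ is induced by an actual isomorphism $Y|_U \to Y|_U$ commuting with the map to $U$, and this defines a birational self-map of $Y$ over $T$. The two constructions are mutually inverse because at the level of $L$ they are visibly inverse operations on the same $K$-algebra automorphism.

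The main step that requires real care is the spreading-out argument in the reverse direction, which relies on $Y \to T$ being proper and $Y$ being normal; this is where one genuinely uses the geometric hypotheses on $Y$ rather than just the generic fiber. The forward direction, while also requiring the classical extension result for smooth proper curves, is essentially formal. Once both constructions are verified to be well-defined and mutually inverse, the claimed identification $Aut(Y_\eta) = Bir(Y/B)$ follows immediately.
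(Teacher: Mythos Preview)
Your argument is correct and amounts to a more careful, function-field-theoretic version of what the paper does. The paper's own proof is a two-sentence sketch: in one direction a relative birational automorphism preserves the generic fiber and hence restricts to an automorphism of $Y_\eta$; in the other, an automorphism of $Y_\eta$ ``gives an action on the smooth fibers of $Y \to T$'' and thereby a birational automorphism over $T$. You make both maps precise by passing through $\mathrm{Aut}_K(L)$, invoking the classical extension result for birational self-maps of smooth proper curves in the forward direction and spreading out in the reverse. The only substantive difference is in the reverse direction: the paper phrases extension geometrically (fiber-by-fiber over the smooth locus), while you phrase it scheme-theoretically via the function field. One small remark: for that reverse direction you do not actually need properness or normality of $Y$ as you suggest; a $K$-algebra automorphism of $L$ already determines a birational self-map of any integral $T$-scheme with function field $L$, so the hypotheses you flag as ``genuinely using the geometry of $Y$'' are in fact not needed there.
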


\begin{proof}
A relative birational automorphism, that is an isomorphism in codimension 1, would preserve the generic fiber, thus would restrict to an automorphism on the generic fiber. Furthermore, an automorphism of the generic fiber gives an action on the smooth fibers of $Y \rightarrow T$ which produces a birational automorphism over $T$. 
\end{proof}

The existence of the $(4,6,12)$-singular fiber on $W \rightarrow T$ gives two Mordell-Weil grouips to study, specifically $MW(Y/T) \cong MW(W/T)$ and $MW(S/\mathbb{P}^1)$. The action of $MW(Y/T)$ acts on the smooth fibers of $Y \rightarrow T$ and since $Y \rightarrow \hat{T}$ is a birationally equivalent model the action extends to the smooth fibers of $S \rightarrow \mathbb{P}^1$. This gives a group morphism $MW(Y/T) \rightarrow MW(S/\mathbb{P}^1)$. This sets us up for the main theorem.

\begin{theorem}
\label{2main}
If there is an infinite number of flopping curves on $S$ then $MW(Y/T)$ has positive rank and there exists an infinite subgroup of $MW(S/\mathbb{P}^1)$ that consists of flopping section.
\end{theorem}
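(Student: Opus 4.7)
The plan is to handle the two conclusions separately. For part (b), concerning an infinite subgroup of $MW(S/\mathbb{P}^1)$ consisting of flopping sections, I would combine Theorem \ref{1main} with the $(K_Y + \epsilon S)$-MMP/$K_S$-MMP comparison of Section 3.3. For part (a), positive rank of $MW(Y/T)$, I would argue by contradiction, using Koll\'ar--Mori's finiteness of relative minimal models together with the identification $Bir(Y/T) = Aut(Y_\eta)$ from the proposition preceding this theorem.

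Under the smoothness hypotheses of Section 4, $S$ is a smooth relatively minimal rational elliptic surface, so every section of $S \to \mathbb{P}^1$ is a $(-1)$-curve, hence $K_S$-negative; the MMP comparison then turns each section into a flopping curve on $Y$. Conversely, Theorem \ref{1main} identifies horizontal flopping curves on $\hat{S}$ as sections of $S \to \mathbb{P}^1$ (the singular-locus hypothesis is vacuous under smoothness), while vertical flopping curves lie in the finitely many components of the finitely many singular fibers and are only finite in number. Hence the set of flopping sections coincides with $MW(S/\mathbb{P}^1)$, and the hypothesis of infinitely many flopping curves on $S$ forces $MW(S/\mathbb{P}^1)$ to be infinite. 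Since the Mordell--Weil group of a rational elliptic surface is finitely generated of rank at most $8$ with bounded torsion, this implies positive rank, and the torsion-free part $\mathbb{Z}^r \subseteq MW(S/\mathbb{P}^1)$ with $r \geq 1$ is the required infinite subgroup of flopping sections, proving (b).

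For part (a), suppose for contradiction that $MW(Y/T) = MW(Y_\eta)$ has rank zero, hence is finite. The standard exact sequence
\[
1 \to MW(Y_\eta) \to Aut(Y_\eta) \to Aut_0(Y_\eta) \to 1,
\]
with $Aut_0(Y_\eta)$ the finite group of automorphisms of the generic fiber fixing the origin, forces $Bir(Y/T) = Aut(Y_\eta)$ to be finite. By \cite{KollarMori} the set of terminal minimal models of $W/T$ up to isomorphism over $T$ is finite, so of the flops $\alpha_i: Y \dashrightarrow Y_i^+$ associated to our infinitely many flopping curves $C_i$, infinitely many have target isomorphic over $T$ to a fixed $Z$. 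Composing with isomorphisms $Y_i^+ \xrightarrow{\sim} Z$ yields birational maps $\beta_i: Y \dashrightarrow Z$. Fixing one such $\beta_0$, the assignment $i \mapsto \beta_0^{-1} \circ \beta_i$ is an injection from the (infinite) index set into $Bir(Y/T)$, contradicting its finiteness.

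The main obstacle I anticipate is verifying that distinct flops yield distinct birational maps after composition with the target isomorphisms, so that the final injection is indeed an injection. The key point is that the indeterminacy locus of a flop $\alpha_i$, viewed on the source $Y$, is precisely its flopping curve $C_i$, and this indeterminacy locus is preserved under post-composition with any global isomorphism of the target. Therefore distinct $C_i$'s force distinct $\beta_i$'s, and similarly distinct $\beta_0^{-1} \circ \beta_i$'s; once this observation is in place the contradiction in part (a) is immediate.
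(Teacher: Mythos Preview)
Your argument for part (a) is correct and follows essentially the same line as the paper: finiteness of minimal models over $T$ (the paper cites \cite{Kawamata97}) combined with infinitely many flopping curves forces $Bir(Y/T)$ to be infinite, and since $MW(Y/T)$ has finite index in $Bir(Y/T)$ it must have positive rank. Your care about indeterminacy loci is fine, though note that once you know the $\beta_i$ are pairwise distinct (which is immediate from their indeterminacy loci on $Y$ being the distinct $C_i$), injectivity of $i \mapsto \beta_0^{-1}\circ\beta_i$ follows formally from invertibility of $\beta_0$ in the birational category.

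Your argument for part (b), however, has a genuine gap. You assert that the MMP comparison of Section 3.3 ``turns each section into a flopping curve on $Y$,'' i.e.\ that the set of flopping sections coincides with all of $MW(S/\mathbb{P}^1)$. The comparison theorem only goes one way: every $(K_Y+S)$-extremal contraction over $T$ restricts to a $K_S$-extremal contraction on $S$. It does \emph{not} assert that every $(-1)$-curve on $S$ spans an extremal ray of $\overline{NE}(Y/T)$. In fact the paper's own example immediately after this theorem (with $T = Bl_0(\mathbb{P}^2)$ and $\mathcal{L} = \omega_T$) exhibits a generic $S$ with $MW(S/\mathbb{P}^1)\cong\mathbb{Z}^8$ while $MW(Y/T)$ is trivial, and explicitly notes that only finitely many of the infinitely many sections of $S$ are extremal in $\overline{NE}(Y/T)$. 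So your identification of the flopping sections with $MW(S/\mathbb{P}^1)$ is false in general.

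The paper's route to (b) is different and depends on (a): having established that $MW(Y/T)$ has positive rank, one restricts its action on the smooth fibers of $Y\to T$ to the smooth fibers of $S\to\mathbb{P}^1$, obtaining a homomorphism $MW(Y/T)\to MW(S/\mathbb{P}^1)$ with image $\Lambda$. Elements of $Bir(Y/T)$ permute flopping curves, and the $0$-section of $S$ is a flopping curve, so the $MW(Y/T)$-orbit of the $0$-section---which is exactly the set of sections corresponding to $\Lambda$---consists of flopping sections. This produces the required subgroup without ever claiming that all sections of $S$ are flopping.
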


\begin{proof}
If there is an infinite number of flopping curves on $S$ then since there are only finitely many vertical fibers, this implies that there are infinitely many sections on $S$ which are flopping curves. By \cite{Kawamata97}, there are only finitely many models up to isomorphisms and the number of these isomorphism classes corresponds to the number of $Bir(Y/S)$-orbits of flopping curves. This is only possible if there were an infinite number of relative birational morphisms. As $MW(Y/S)$ is a finite index subgroup of $Bir(Y/S)$, this implies that $MW(Y/T)$ has positive rank.

Now consider the orbit of the $0$-section of $S$ (which we know is a flopping curve) under the action of $MW(Y/T)$. There is a morphism of groups $MW(Y/S) \rightarrow \Lambda \leq MW(S/\mathbb{P}^1)$ obtained by restricting the action of $MW(Y/S)$ on the smooth fibers of $Y \rightarrow T$ to the smooth fibers of $S \rightarrow \mathbb{P}^1$. Then $\Lambda$ must be an infinite subgroup of $MW(S/\mathbb{P}^1)$ which acts by mapping the flopping curves of the $MW(Y/S)$ orbit of the $0$-section to other flopping curve, but then the action of $\Lambda$ on the $0$-section gives exactly the sections corresponding to $\Lambda$ and so there is a subgroup of $MW(S/\mathbb{P}^1)$ consisting of flopping sections on $Y$.
\end{proof}

\begin{proposition}
Let $Y \dashrightarrow Y'$ be a flopping contraction of a section on $S$ and let $S'$ be it's strict transform on $Y'$, then the convex hull of the extremal rays in the $K_{Y'} + S'$-negative part of the cone of the relative cone of curves $\overline{NE}(Y'/T)$ is a rational polyhedral cone.
\end{proposition}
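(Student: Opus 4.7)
The plan is to reduce the statement on $\overline{NE}(Y'/T)$ to one on the surface $S'$, via the triviality of $K_{Y'}$ over $T$ and adjunction. Since $h : X \to W$ is crepant (Lemma \ref{crep}) and flops preserve numerical triviality of the canonical divisor over the base, $K_{Y'}$ is numerically trivial over $T$, so $K_{Y'} + S'$ is numerically equivalent to $S'$ over $T$. Any curve $C$ in a fiber of $Y' \to T$ with $(K_{Y'}+S') \cdot C < 0$ therefore satisfies $S' \cdot C < 0$, which forces $C \subset S'$. Adjunction then gives $(K_{Y'}+S') \cdot C = K_{S'} \cdot C$ for such $C$, so describing the $(K_{Y'}+S')$-negative extremal rays of $\overline{NE}(Y'/T)$ reduces to describing the $K_{S'}$-negative rays in $\overline{NE}(S')$.

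Next I would identify $S'$ as a weak del Pezzo surface of degree $1$. By the preceding theorem that the $K_Y + S$-MMP agrees with the $K_S$-MMP on $S$, the flopping contraction restricts to the contraction $\pi : S \to S'$ of the section $\sigma$, which is a $(-1)$-curve on the rational elliptic surface $S$; in particular $S'$ is smooth with Picard rank $9$. From $K_S = \pi^* K_{S'} + \sigma$ together with $\sigma^2 = K_S \cdot \sigma = -1$ one computes $K_{S'}^2 = 1$. Since $-K_S$ is numerically a general fiber $F$ of $S \to \mathbb{P}^1$ and hence nef, while $F \cdot \sigma = 1$, for every irreducible curve $C \subset S'$ with strict transform $\tilde C$ the projection formula yields
\[
-K_{S'} \cdot C = -(K_S - \sigma) \cdot \tilde C = F \cdot \tilde C + \sigma \cdot \tilde C \geq 0,
\]
so $-K_{S'}$ is nef; combined with $(-K_{S'})^2 = 1 > 0$ and effectivity (as the pushforward of $F$), $-K_{S'}$ is also big.

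Consequently $S'$ is a weak del Pezzo of degree $1$, for which $\overline{NE}(S')$ is a rational polyhedral cone generated by the finitely many $(-1)$- and $(-2)$-curves on $S'$. Since every $(K_{Y'}+S')$-negative extremal ray of $\overline{NE}(Y'/T)$ is generated by such a curve contained in $S'$, these rays are finite in number and their convex hull is a rational polyhedral cone. The main technical obstacle I anticipate is verifying that no curve $C \not\subset S'$ lying in some other component of the special fiber over $0 \in T$ contributes a $(K_{Y'}+S')$-negative extremal ray; this is handled by the observation that any such $C$ satisfies $S' \cdot C \geq 0$, hence $(K_{Y'}+S') \cdot C \geq 0$, so no contribution arises from outside $S'$.
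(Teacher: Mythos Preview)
Your argument is correct and takes a somewhat different route from the paper. Both proofs begin the same way: since $K_{Y'}$ is numerically trivial over $T$, any $(K_{Y'}+S')$-negative curve must lie in $S'$, and by adjunction its class is $K_{S'}$-negative. From there the paper argues directly on the original rational elliptic surface $S$: the flopping curves surviving on $S'$ are strict transforms of $(-1)$-curves on $S$ disjoint from the contracted section $\sigma$, and by the $E_8$ lattice structure of $\MW(S/\mathbb{P}^1)$ there are at most $240$ sections disjoint from a fixed one, plus finitely many vertical components, giving finitely many extremal rays. You instead identify $S'$ intrinsically as a weak del Pezzo surface of degree $1$ (via $K_{S'}^2=1$ and the nefness computation $-K_{S'}\cdot C = F\cdot\tilde C + \sigma\cdot\tilde C \ge 0$) and then invoke the standard fact that $\overline{NE}$ of a weak del Pezzo is rational polyhedral, spanned by its $(-1)$- and $(-2)$-curves. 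The paper's approach yields an explicit numerical bound tied to the Mordell--Weil lattice; your approach is more structural, avoids the lattice count, and makes transparent why the finiteness persists (it is the weak Fano condition on $S'$). The two are of course secretly the same combinatorics: the $240$ disjoint sections on $S$ are exactly the $(-1)$-curves on the degree-$1$ del Pezzo $S'$.
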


\begin{proof}
From the assumptions of $S$ and $Y$ being smooth, after flopping a curve the only extremal rays are curves on $S'$ which are $(-1)$-curves. Given a section of a relatively minimal rational elliptic surface, there are at most 240 sections which are disjoint from a fixed section. These plus the vertical fibers are the only possible flopping curves and so there are finitely many extremal rays in the $K_{Y'} + S'$-negative part of the cone of the relative cone of curves $\overline{NE}(Y'/T)$. thus they're convex hull form a rational polyhedral cone.
\end{proof}

The above shows that the only means of which to obtain and infinite number of models via flops have to come at the stages $(Y,S)$ and not in any further birational models obtained from running the relative MMP. As a consequence we have the following.

\begin{corollary}
If the convex hull of the extremal rays of the $K_Y + S$-negative part of $\overline{NE}(Y/S)$ is rational polyhedral then the number of models arising from flopping curves on $S$ is finite.
\end{corollary}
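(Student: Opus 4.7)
My plan is to derive the corollary by combining the hypothesis with the preceding Proposition and Theorem \ref{2main}. First I would use the hypothesis directly: rational polyhedrality of the $K_Y+S$-negative part of $\overline{NE}(Y/T)$ supplies only finitely many such extremal rays, and by Theorem \ref{1main} each is represented by a flopping curve on $S$---a section of $S \to \P^1$ or a component of a vertical fiber. Hence the initial pair $(Y,S)$ admits only finitely many flops of curves on $S$.

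Next I would propagate this finiteness along the flopping tree. After any flop $(Y,S) \dashrightarrow (Y',S')$ of a curve on $S$, the preceding Proposition supplies a rational polyhedral $K_{Y'}+S'$-negative cone at $(Y',S')$, bounded by the $240$ sections disjoint from a fixed one plus finitely many vertical fiber components, so $(Y',S')$ again admits only finitely many further flops. Iterating, every model in the flopping tree has finite out-degree. To pass from finite branching at every node to global finiteness I would invoke the contrapositive of Theorem \ref{2main}: if the flopping tree produced infinitely many non-isomorphic models, then by the orbit-counting argument in that theorem's proof---using Kawamata's \cite{Kawamata97} finiteness of minimal models up to isomorphism together with the morphism $MW(Y/T) \to MW(S/\P^1)$---one would obtain an infinite subgroup of $MW(S/\P^1)$ whose orbit of the zero-section consists of flopping sections at the initial pair $(Y,S)$. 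This contradicts the finite number of extremal rays guaranteed by rational polyhedrality there.

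The main subtlety is this final orbit-translation step: the inductive finite-branching argument alone only rules out any single node having infinite out-degree, so Theorem \ref{2main}'s orbit-counting is what converts ``infinitely many total models'' into ``infinitely many flopping sections on the initial $(Y,S)$'', which the hypothesis then rules out. Once the preceding Proposition pins down every non-initial model as polyhedral, the hypothesis at $(Y,S)$ is exactly what chokes off the only remaining possible source of infinite branching.
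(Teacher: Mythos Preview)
Your first two paragraphs correctly assemble finite branching at every node of the flopping tree: the hypothesis handles the root $(Y,S)$, and the preceding Proposition handles every node reached after at least one flop. But you then take an unnecessary detour. The missing observation is that the tree has \emph{bounded depth}: by the Theorem identifying the $K_Y+S$-MMP with the $K_S$-MMP, each flop replaces $S$ by the surface obtained by contracting a $(-1)$-curve, so the Picard number of the strict transform drops by one at each step and the process terminates after at most $\rho(S)-1$ flops. Finite branching at every node together with bounded depth immediately gives a finite tree, hence finitely many models. This is the content of the paper's remark that infinitude ``can only come at the stage $(Y,S)$'': once the root is polyhedral, everything downstream is both locally finite and of bounded length, so the corollary follows directly from the Proposition without invoking Theorem~\ref{2main} at all.

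Your alternative route through Theorem~\ref{2main} is not only roundabout but has a genuine gap. You argue that infinitely many models would force $MW(Y/T)$ to have positive rank via Kawamata's finiteness, and then that the orbit of the zero section under $MW(Y/T)$ would produce infinitely many flopping sections on the \emph{initial} $(Y,S)$. The second step is not justified: an element of $MW(Y/T)$ acts only as a birational automorphism of $Y$, and there is no reason it carries extremal rays of $\overline{NE}(Y/T)$ to extremal rays. The sections appearing in the orbit are certainly $(-1)$-curves on $S$ and hence $(K_Y+S)$-negative, but they need not span extremal rays --- they could lie in the interior of the cone. Theorem~\ref{2main} itself starts from the hypothesis ``infinitely many flopping curves on $S$'' (i.e.\ infinitely many extremal rays already on the initial model), which is precisely what you are trying to deduce, so you cannot bootstrap from its proof. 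Drop this detour and use the bounded-depth argument instead.
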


\begin{corollary}
If $rk(MW(Y/S)) = 0$ then there are only finitely many flopping curves of $Y$ in $S$.
\end{corollary}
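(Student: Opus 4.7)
The plan is to derive this corollary as a direct contrapositive of Theorem \ref{2main}. Suppose for contradiction that the Mordell-Weil rank in the hypothesis vanishes yet $Y$ admits infinitely many flopping curves contained in $S$. Because $S \to \mathbb{P}^1$ has only finitely many reducible fibers with finitely many components, at most finitely many flopping curves of $Y$ can be vertical components in $S$, so infinitely many of these flopping curves must be horizontal in $S$. Theorem \ref{1main} then identifies each such horizontal flopping curve with a rational section of $S \to \mathbb{P}^1$, so $S$ carries infinitely many sections that are flopping curves on $Y$.

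With infinitely many flopping curves on $S$ now in hand, Theorem \ref{2main} applies directly and forces $\mathrm{MW}(Y/T)$ to have positive rank. Interpreting $\mathrm{MW}(Y/S)$ in the corollary's hypothesis as the Mordell-Weil group of the ambient elliptic fibration $Y \to T$ (the only sensible reading, since $Y$ fibers over $T$ and $\mathrm{MW}(Y/T)$ is the group used throughout Theorem \ref{2main}), this is a direct contradiction and completes the proof. Equivalently, one could bypass Theorem \ref{2main} and argue from scratch: rank-zero finitely generated $\mathrm{MW}(Y/T)$ is finite, hence $\mathrm{Bir}(Y/T)$ is finite, so by Kawamata's finiteness of minimal models \cite{Kawamata97} there are only finitely many $\mathrm{Bir}(Y/T)$-orbits of flopping sections, each of finite size, yielding only finitely many flopping curves in $S$.

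There is essentially no obstacle in this argument: it is a one-line contrapositive once Theorems \ref{1main} and \ref{2main} are in place. The only point requiring a moment's care is the reduction from \emph{flopping curves in $S$} to \emph{flopping sections of $S \to \mathbb{P}^1$} via Theorem \ref{1main}, but this is exactly the same observation made at the start of the proof of Theorem \ref{2main}, namely that since $S \to \mathbb{P}^1$ has only finitely many vertical components, an infinite supply of flopping curves on $S$ must produce an infinite supply of horizontal ones, and these are sections by Theorem \ref{1main}.
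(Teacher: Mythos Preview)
Your argument is correct and matches the paper's intent: the corollary is stated without proof in the paper, as an immediate contrapositive of Theorem~\ref{2main}, and you have spelled out exactly that reasoning (including the same reduction from flopping curves to flopping sections via finiteness of vertical components). Your alternative direct argument via Kawamata's finiteness is also fine and simply unwinds the proof of Theorem~\ref{2main} rather than invoking its statement.
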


This must hold even if the number of sections of $S$ is infinite. As such the embedding of $S$ into $Y$ prescribes the distinguished sections which are to be flopped to obtain different minimal models. 

\begin{example}
Consider the example from \cite[Section 11]{HK}. Let $T = Bl_0(\mathbb{P}^2)$ and $\mathcal{L} = \omega_T$, then any Weierstrass model $W := W(\mathcal{L}, f,g)$ with $f$ and $g$ generically chosen in $\mathcal{L}^{-4}$ and $\mathcal{L}^{6}$ has at worst canonical singularities and is birational to a Weierstrass model over $\mathbb{P}^2$ which has an $(4,6,12)$-fiber over $0 \in \mathbb{P}^2$. The corresponding rational elliptic surface is generic since $f$ and $g$ is generic and so has Mordell-Weil group $\mathbb{Z}^8$.

Using the methods of \cite{HK}, it can be shown that the Mordell Weil group of $W \rightarrow T$ is trivial. Even though the rational elliptic surface $S$ over $0$ has Mordell-Weil rank $8$ and so there is an infinite number of sections yet only finitely many are extremal rays in the relative cone of curves.
\end{example}

If the $(4,6,12)$-fiber was formed by normal crossings and produces the extremal rational elliptic surfaces which are rational elliptic surfaces with fintie Mordell-Weil group. Then the number of rantional curves with negative self intersection is finite, \cite{MR}, and they consist of vertical fibers and the torsion sections (which are disjoint from each other). Thus in these cases the number of models arising from flops on $S$ is finite.

\begin{example}
Let $T = \mathbb{P}^2$, $\mathcal{L} = \omega_T$ and $\alpha, \beta \in H^0(\mathcal{L}^{-1})$ be general global sections, then we can define the Weierstrass model $W := W(\mathcal{L}, \alpha^2\beta^2, \alpha^3\beta^3) \rightarrow T$. This is an elliptic fibrations such that over a general fiber of $div(\alpha) \cup div(\beta) \subset T$, is a $(2,3,6)$-singular fibers which resolves to a $I_0^*$ singular fiber. Over the $9$ collision points of $div(\alpha) \cap div(\beta)$ are $(4,6,12)$-singular fibers. 

The singular locus of $W$ is exactly the singular points of the singular fibers of $W \rightarrow T$. We can resolve $W$ as follow, first we can apply the weighted blow up to the $9$ $(4,6,12)$-singular fibers resulting in $9$ rational elliptic surfaces. Then the singular locus after the weight blow up becomes a disjoint smooth curves isomorphic to $div(\alpha)$ and $div(\beta)$ respectively, which we can resolve similar to the elliptic surface case to get $I_0^*$ singular fibers generically over $div(\alpha) \cup div(\beta)$. This gives a smooth crepant resolution $X \rightarrow W \rightarrow T$, and on $X$ there exists $9$ smooth rational elliptic surfaces that map to points on $T$ and each with only $2$ $I_0^*$ singular fibers.

From the classification in \cite{OguisoShioda}, these rational elliptic surface admit only $4$ sections with the singular fibers being of type  $D_4$ having $5$ components. Thus there is a total of $14$ rational curves on $S$, which implies that there are only finitely many possible contractions of rational curves. This gives an upper bound of ${ 14 \choose 9 }2^9$ possible combinations of contractions on the rational elliptic surface. As each surface is independent of each other, the number of relative minimal models of $W_T \rightarrow \mathbb{P}^2$ is bounded above by $9 \cdot { 14 \choose 9 }2^9$. Furthermore, as each rational elliptic surface is smooth, there is at least $9^9$ different minimal models since each rational elliptic surface permits up to $9$ flops.
\end{example}

\bibliographystyle{plain}
\bibliography{refs}{}

\end{document}